\newcommand{\R}{\mathbb{R}}
\newcommand{\C}{\mathbb{C}}
\newcommand{\N}{\mathbb{N}}
\renewcommand{\H}{\mathbb{H}}
\renewcommand{\L}{\mathcal{L}}
\renewcommand{\S}{\mathcal{S}}
\newtheorem{theorem}{Theorem}[section]
\newtheorem{corollary}[theorem]{Corollary}
\newtheorem{lemma}[theorem]{Lemma}
\newtheorem{proposition}[theorem]{Proposition}
\DeclareMathOperator{\Real}{Re}
\DeclareMathOperator{\Imaginary}{Im}
\DeclareMathOperator\supp{supp}
\DeclareMathOperator\spn{span}
\theoremstyle{definition}
\newtheorem{remark}[theorem]{Remark}
\numberwithin{equation}{section}
\begin{document}
\title[On a space of functions with entire Laplace transforms]{On a space of functions with entire Laplace transforms and its connection with the optimality of the Ingham-Karamata theorem}

\author[M. Callewaert]{Morgan Callewaert}
\address{Department of Mathematics: Analysis, Logic and Discrete Mathematics\\ Ghent University\\ Krijgslaan 281\\ 9000 Gent\\ Belgium}
\email{morgan.callewaert@UGent.be}

\author[L. Neyt]{Lenny Neyt}
\thanks{L. Neyt gratefully acknowledges support from the Alexander von Humboldt Foundation and by the Research Foundation--Flanders through the postdoctoral grant 12YG921N. His research was funded in part by the Austrian Science Fund (FWF) 10.55776/ESP8128624. For open access purposes, the author has applied a CC BY public copyright license to any author-accepted manuscript version arising from this submission.}
\address{University of Vienna\\ Faculty of Mathematics\\ Oskar-Morgenstern-Platz 1 \\ 1090 Wien\\ Austria}
\email{lenny.neyt@univie.ac.at}

\author[J. Vindas]{Jasson Vindas}
\thanks{The work of J. Vindas was supported by the Research Foundation--Flanders through the FWO-grant G067621N and by Ghent University through the grant number bof/baf/4y/2024/01/155}
\address{Department of Mathematics: Analysis, Logic and Discrete Mathematics\\ Ghent University\\ Krijgslaan 281\\ 9000 Gent\\ Belgium}
\email{jasson.vindas@UGent.be}

\subjclass[2020]{Primary 46E10, 11M45; Secondary 40E05, 44A10.}
\keywords{Optimality in complex Tauberian theorems; Ingham-Karamata theorem; Wiener-Ikehara theorem; entire Laplace transforms; remainders}
\begin{abstract}  We study approximation properties of the Fr\'{e}chet space of all continuously differentiable functions $\tau$ such that $\tau'(x)=o(1)$
 and such that their Laplace transforms admit entire extensions to $\mathbb{C}$. As an application, these approximation results are combined with the open mapping theorem to show the optimality theorem for the Ingham-Karamata Tauberian theorem.
 
\end{abstract}
\maketitle

\begin{center}

\emph{Dedicated to the memory of Franciscus Sommen}

\end{center}

\bigskip

\section{Introduction}
Let $V_{-\infty}$ be the function space consisting of all those $\tau\in C^{1}[0,\infty)$ such that 
\begin{equation}
\label{eq:ineq1}
\tau'(x)=o(1) , \qquad x \to \infty ,
\end{equation}
holds and such that its Laplace transform
\[ \L\{\tau; s\} = \int_{0}^{\infty} \tau(x) e^{-sx} dx \]
admits extension to $\mathbb{C}$ as an entire function. 

In this article we are interested in approximation properties of the space $V_{-\infty}$ with respect to the family of norms 
\begin{equation}
\label{eq:ineq2}  \| \tau \|_{\alpha, r}= \| \tau' \|_{L^{\infty}} +\sup_{s \in \overline{R}_{\alpha,r}} | \L\{\tau;s\}|,
\end{equation}
where the parameters satisfy $\alpha<0$ and $r>0$ and $R_{\alpha, r}$ stands for the open rectangle $\{ s \in \C \mid  \alpha<\Real{s} <1 \text{ and } |\Imaginary{s}| < r \}$. The approximation features we shall study are intimately connected with the optimality of the Ingham-Karamata theorem, as we now proceed to explain.

The Ingham-Karamata theorem\footnote{This Tauberian theorem is sometimes also known as Newman's theorem or the Fatou-Riesz theorem for Laplace transforms.} is a foundational result in complex Tauberian theory for Laplace transforms. In its simplest form, it states:

\begin{theorem}\label{Th: I-K} Let $\tau$ be Lipschitz continuous on $[0,\infty)$. If its Laplace transform 
$\mathcal{L}\{\tau ; s\}$ has an analytic continuation across the imaginary axis, then
\begin{equation}
\label{eq:1I-K}
\tau(x)=o(1) .
\end{equation}
\end{theorem}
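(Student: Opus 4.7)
My plan is to prove Theorem \ref{Th: I-K} by reducing it to the classical \emph{integrated} form of the Ingham-Karamata theorem applied to $\tau'$, and to establish that integrated form via Newman's contour-integration trick. The Lipschitz hypothesis puts $\tau'\in L^{\infty}[0,\infty)$, and the identity
\[
\L\{\tau';s\}=s\,\L\{\tau;s\}-\tau(0),\qquad \Real s>0,
\]
transfers the analytic extension across $i\R$ from $\L\{\tau;\cdot\}$ to $\L\{\tau';\cdot\}$. So if I can show that, for bounded $f$ with $G(s)=\L\{f;s\}$ extending holomorphically past the imaginary axis, the integral $\int_0^\infty f(x)\,dx$ converges and equals $G(0)$, then, applying this to $f=\tau'$, I obtain $\int_0^\infty\tau'(x)\,dx=\L\{\tau';0\}=-\tau(0)$ (using that $\L\{\tau;\cdot\}$ is finite at $0$, so $s\,\L\{\tau;s\}$ vanishes there), whence $\tau(T)=\tau(0)+\int_0^T\tau'(x)\,dx\to 0$, which is precisely \eqref{eq:1I-K}.

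For the integrated statement I fix $R>0$, choose $\delta=\delta(R)>0$ (by compactness) so that $G$ is analytic on a neighbourhood of $\{|s|\le R,\;\Real s\ge -\delta\}$, set $G_T(s)=\int_0^T f(x)e^{-sx}\,dx$ (entire in $s$), and write, via Cauchy's theorem,
\[
G(0)-G_T(0)=\frac{1}{2\pi i}\int_{\Gamma}\bigl(G(s)-G_T(s)\bigr)\,e^{sT}\left(1+\frac{s^2}{R^2}\right)\frac{ds}{s},
\]
where $\Gamma$ bounds this truncated half-disc. The Newman kernel $e^{sT}(1+s^2/R^2)/s$ is engineered so that on $|s|=R$ one has $|1+s^2/R^2|/|s|=2|\Real s|/R^2$. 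On the right semicircle this cancels the tail bound $|G(s)-G_T(s)|\le\|f\|_{\infty}e^{-T\Real s}/\Real s$, yielding a contribution of size $O(\|f\|_{\infty}/R)$ uniform in $T$. For the $G_T$-part on the left half of $\Gamma$ (where the tail representation fails), $G_T$ is entire, so I deform the contour to the left semicircle of $|s|=R$ and there use the head bound $|G_T(s)|\le\|f\|_{\infty}e^{-T\Real s}/|\Real s|$ against the same kernel, again obtaining $O(\|f\|_{\infty}/R)$.

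The main obstacle I expect is the remaining contribution of $G$ on the left vertical segment $\{\Real s=-\delta,\;|\Imaginary s|\le\sqrt{R^2-\delta^2}\}$, where no tail-type estimate on $G$ is available and no entire-function trick lets one slide the contour. The only lever there is the pointwise decay $|e^{sT}|=e^{-\delta T}$ combined with the boundedness of $G$ and of the kernel on this compact segment (where $|s|\ge\delta$ keeps $1/|s|$ under control); together they yield a contribution tending to $0$ as $T\to\infty$ with $R$, $\delta$ fixed. Taking $\limsup_{T\to\infty}$ in the contour identity and then letting $R\to\infty$ forces $G_T(0)\to G(0)$, completing the integrated Ingham-Karamata theorem and, by the reduction of the first paragraph, the proof of Theorem \ref{Th: I-K}. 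The conceptual difficulty of the whole argument is really concentrated in the choice of Newman's kernel: without the factor $1+s^2/R^2$, the right-semicircle estimate blows up like $\|f\|_\infty e^{T\Real s}/\Real s$ and the method collapses.
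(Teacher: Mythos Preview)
The paper does not prove Theorem \ref{Th: I-K}; it is quoted in the introduction as the classical Ingham--Karamata theorem and the reader is referred to the monographs \cite{A-B-H-N,C-Qbook,korevaarbook}. So there is no ``paper's own proof'' to compare against. Your write-up is the standard Newman contour argument, and both the reduction $\L\{\tau';s\}=s\,\L\{\tau;s\}-\tau(0)$ (valid because a Lipschitz $\tau$ is absolutely continuous with at most linear growth, so the Laplace integrals converge for $\Real s>0$ and integration by parts is legitimate) and the contour estimate are essentially correct.

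One small gap to patch: the ``left half'' $\Gamma_-$ of your contour is not only the vertical segment at $\Real s=-\delta$; it also contains the two short circular arcs $\{|s|=R,\ -\delta\le\Real s\le 0\}$. On those arcs the uniform bound $|e^{sT}|=e^{-\delta T}$ you invoke for the segment does not hold. The clean fix is to treat the entire $G$-contribution on $\Gamma_-$ by dominated convergence: for every $s\in\Gamma_-$ with $\Real s<0$ one has $e^{sT}\to 0$, while $|e^{sT}|\le 1$ and $G(s)\,(1+s^2/R^2)/s$ is bounded on the compact curve $\Gamma_-$ (note $|s|\ge\delta$ there). Hence $\int_{\Gamma_-}G(s)e^{sT}(1+s^2/R^2)\,\dfrac{ds}{s}\to 0$ as $T\to\infty$, and the rest of your argument (take $\limsup_{T\to\infty}$, then let $R\to\infty$) goes through unchanged.
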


This classical result and its many generalizations have important applications in diverse areas of mathematics such as number theory and operator theory. In particular, numerous developments on the subject from the last four decades have been motivated by the theory of operator semigroups and its applications in stability theory for various classes of evolution equations. We refer to the monographs \cite{A-B-H-N,C-Qbook, korevaarbook} for accounts on the Ingham-Karamata theorem and related complex Tauberian theorems; see also the articles \cite{Chill-Seifert2016,d-vOptIngham,D-V2019,Stahn2018} for some recent results.

It is natural to attempt to improve the decay rate of $\tau$ in \eqref{eq:1I-K} by strengthening the assumptions on the Laplace transform from Theorem \ref{Th: I-K}. Such a problem has been extensively studied in the literature. It is known (cf. \cite{A-B-H-N,Chill-Seifert2016, Stahn2018}) that quantified information about the shape of the region of analytic continuation and growth assumptions on such a region for the Laplace transform lead together to quantitative remainders in the Ingham-Karamata theorem. However, when bounds on the Laplace transform are absent, it is impossible to get a better remainder than $o(1)$ in \eqref{eq:1I-K} even if one assumes analytic continuation of $\mathcal{L}\{\tau ; s\}$ to a whole half-plane containing the imaginary axis. This was first studied in \cite{D-V-NoteAbsenceRemWienerIkehara}, refuting a conjecture from \cite{muger}. The stronger hypothesis of entire extension does not yield any reasonable remainder either, as stated by the following theorem.

\begin{theorem}\label{th3 abserrorW-I} 
Let $\rho$ be an arbitrary positive function tending to 0. There is always $\tau\in V_{-\infty}$ such that
\begin{equation}
\label{eq:2I-K}
\limsup_{x\to\infty} \frac{|\tau(x)|}{\rho(x)}=\infty.
\end{equation}
\end{theorem}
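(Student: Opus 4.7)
The plan is a functional-analytic argument by contradiction, combining the Baire category theorem (equivalent here to the closed graph / open mapping theorem, in line with the abstract) with the approximation results for $V_{-\infty}$ developed earlier in the paper. First, point evaluation is continuous on $V_{-\infty}$: integrating by parts gives $\L\{\tau;s\} = (\tau(0) + \L\{\tau';s\})/s$ for $\Real s > 0$, and specializing to $s = 1 \in \overline{R}_{\alpha,r}$ yields $|\tau(0)| \leq \|\tau\|_{\alpha,r}$, hence $|\tau(x)| \leq (1+x)\|\tau\|_{\alpha,r}$ for every $x \geq 0$. Suppose toward a contradiction that every $\tau \in V_{-\infty}$ satisfies $\limsup_{x \to \infty} |\tau(x)|/\rho(x) < \infty$, and write
$$
V_{-\infty} = \bigcup_{n,m \in \N} A_{n,m}, \qquad A_{n,m} := \{\tau \in V_{-\infty} : |\tau(x)| \leq n\rho(x) \text{ for all } x \geq m\}.
$$
Each $A_{n,m}$ is closed thanks to the continuity of point evaluation, so by Baire some $A_{n_0,m_0}$ has non-empty interior. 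Standard homogenization in a Fréchet space then delivers parameters $\alpha<0$, $r>0$ and a constant $C>0$ with
$$
|\tau(x)| \leq C \rho(x) \|\tau\|_{\alpha,r}, \qquad x \geq m_0, \ \tau \in V_{-\infty}.
$$

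To contradict this estimate, pick $R > r$ and set $\sigma(x) = \sin(Rx)/R$. Then $\sigma' = \cos(R\,\cdot\,) \in L^\infty$ and $\L\{\sigma;s\} = 1/(s^2+R^2)$ is holomorphic and uniformly bounded on $\overline{R}_{\alpha,r}$. The approximation theorem of this paper---which I expect to state that $V_{-\infty}$ is $\|\cdot\|_{\alpha,r}$-dense in the larger space of $C^1$ functions with $L^\infty$ derivative whose Laplace transform extends holomorphically through $\overline{R}_{\alpha,r}$---then produces $\tau_n \in V_{-\infty}$ with $\|\tau_n - \sigma\|_{\alpha,r} \to 0$. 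In particular $M := \sup_n \|\tau_n\|_{\alpha,r} < \infty$, and by continuity of point evaluation $\tau_n(x) \to \sigma(x)$ pointwise. Taking $x_k := \pi/(2R) + 2\pi k/R$, so that $\sigma(x_k) = 1/R$ and $x_k \to \infty$, applying the estimate to each $\tau_n$ and letting $n \to \infty$ gives, for all sufficiently large $k$,
$$
\frac{1}{R} = |\sigma(x_k)| \leq CM \rho(x_k),
$$
which is absurd since $\rho(x_k) \to 0$.

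The main obstacle lies entirely in the approximation step: for arbitrary $\alpha < 0$ and $r > 0$, one must approximate in the strong norm $\|\cdot\|_{\alpha,r}$ a function whose Laplace transform has genuine singularities just outside $\overline{R}_{\alpha,r}$ by entire-Laplace-transform elements of $V_{-\infty}$, all while simultaneously controlling $\tau'$ in $L^\infty$ and the transform uniformly on the whole closed strip-rectangle. This density---and its uniformity in the defining parameters---is the principal technical content of the earlier sections; once it is in hand, the Baire/open-mapping reduction and the construction of the explicit oscillating witness $\sin(Rx)/R$ are routine.
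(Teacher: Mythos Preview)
Your Baire-category reduction is essentially the paper's open-mapping argument and is fine. The gap is in your witness $\sigma(x)=\sin(Rx)/R$. Its derivative $\sigma'(x)=\cos(Rx)$ is merely bounded, not $o(1)$, so $\sigma\notin V_{\alpha,r}$ in the paper's sense. Theorem~\ref{MainResult} only asserts that $V_{-\infty}$ is dense in $V_{\alpha,r}$; it says nothing about the larger space of $C^1$ functions with $L^\infty$ derivative that you invoke. Worse, no such extension is possible: if $\tau_n\in V_{-\infty}$ and $\|\tau_n-\sigma\|_{\alpha,r}\to 0$, then in particular $\|\tau_n'-\cos(R\,\cdot\,)\|_{L^\infty}\to 0$; but each $\tau_n'$ lies in $C_0[0,\infty)$, which is a closed subspace of $L^\infty$, and $\cos(R\,\cdot\,)\notin C_0$. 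So the approximation you need is not merely unproved here---it is false.

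The paper repairs this by interposing a slowly decaying envelope. Lemma~\ref{l:l1aofr} produces $L\in C^1[0,\infty)$ with $L(x),L'(x)\to 0$ and $\rho(x)=o(L(x))$, whose Laplace transform continues analytically to a sector. Then $L_b(x)=L(x)\sin(bx)$ \emph{does} lie in $V_{\alpha,r}$ for $b$ large (now $L_b'\to 0$), the family $\{L_b : b\in[M,M+1]\}$ is $\|\cdot\|_{\alpha,r}$-bounded, and one applies the inequality~\eqref{eq:3aofr}---extended from $V_{-\infty}$ to $V_{\alpha,r}$ via Theorem~\ref{MainResult}---to these genuine elements of $V_{\alpha,r}$. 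Choosing, for each large $x$, a $b\in[M,M+1]$ with $\sin(bx)=1$ gives $L(x)=O(\rho(x))$, contradicting $\rho=o(L)$. The essential ingredient you are missing is precisely this majorization lemma: an amplitude that decays (so the oscillating witness belongs to $V_{\alpha,r}$) yet decays strictly slower than $\rho$ (so the contradiction survives).
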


A slightly weaker version  (but still strong enough to yield the optimality of the classical Ingham-Karamata theorem) of Theorem \ref{th3 abserrorW-I} was shown in \cite{B-D-V-AbsenceRemWienerIkeharaConstructive}, with the condition \eqref{eq:ineq1} replaced by $\tau'(x) = O(1)$ and where the function $\tau$ was explicitly constructed. 

Theorem \ref{th3 abserrorW-I} might essentially be proved via the open mapping approach developed in \cite{D-V-NoteAbsenceRemWienerIkehara}. We have however discovered a gap in the method from \cite{D-V-NoteAbsenceRemWienerIkehara}. In fact, one key step in such a method consists in establishing that a certain inequality (arising from application of the open mapping theorem) may be applied to continuous differentiable functions on $[0,\infty)$ that satisfy \eqref{eq:ineq1} and whose Laplace transforms have analytic continuation to a neighborhood of one of the rectangles $\overline{R}_{\alpha,r}$, which in the context of \cite{D-V-NoteAbsenceRemWienerIkehara} would follow if all such functions belong to the completion of $V_{-\infty}$ with respect to the norm \eqref{eq:ineq2}. It turns out that the approximation net proposed in \cite{D-V-NoteAbsenceRemWienerIkehara} fails to deliver the latter claimed approximation property. The aim of this paper is to supply a proof of this crucial approximation claim, amending so the open mapping method from \cite{D-V-NoteAbsenceRemWienerIkehara}. 

We would like to point out that the open mapping approach has further played a central role in establishing optimality results for quantified versions of the Ingham-Karamata theorem, see \cite{DS2019IJM, DS2019AM}. We also mention that the use of functional analysis arguments to produce counterexamples in Tauberian theory goes back to Ganelius and, according to himself \cite[p.~3]{ganelius}, was first suggested by H\"{o}rmander.

Finally, it is also worth noticing that Theorem \ref{th3 abserrorW-I} yields the optimality result for the classical Wiener-Ikehara theorem \cite{korevaarbook}, originally obtained in \cite{D-V-NoteAbsenceRemWienerIkehara} (see also \cite{B-D-V-AbsenceRemWienerIkeharaConstructive} for a constructive proof).
\begin{corollary}\label{OW-I}
Let $\rho$ be an arbitrary positive function tending to 0. There is a non-decreasing function $S$ on $[0,\infty)$
 such that its Laplace-Stieltjes transform 
 \[\mathcal{L}\{dS;s\}=\int_{{0}^{-}}^{\infty} e^{-s x} dS(x) \qquad \mbox{converges for }\Real s>1\]
and 
 \[ 
\mathcal{L}\{dS;s\} - \frac{a}{s-1}
 \]
 admits extension to $\mathbb{C}$ as an entire function for some $a>0$, but such that
 \[
 \limsup_{x\to\infty} \frac{|S(x)-ae^{x}|}{\rho(x)e^{x}}=\infty.
 \] 
  \end{corollary}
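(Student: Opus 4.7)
The strategy is to derive the required $S$ from a function $\tau \in V_{-\infty}$ produced by Theorem \ref{th3 abserrorW-I}. First I would apply the theorem to the auxiliary decay function $\tilde\rho(x) := \rho(x) + e^{-x}$, which still tends to zero, obtaining $\tau \in V_{-\infty}$ with $\tau'(x) = o(1)$ and a sequence $x_n \to \infty$ along which $|\tau(x_n)|/\tilde\rho(x_n) \to \infty$; in particular $|\tau(x_n)| \ge n\, e^{-x_n}$ for $n$ large. Rescaling $\tau$ by a small positive constant preserves these properties and lets us assume $\|\tau'\|_{L^\infty} \le 1/2$.

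With $a > 0$ to be specified, set
\[
S(x) := a(e^x - 1) + e^x \tau(x) - \tau(0), \qquad x \ge 0;
\]
then $S(0) = 0$ and $S'(x) = e^x(a + \tau(x) + \tau'(x))$. Using the identity $\mathcal{L}\{\tau'; z\} = z\mathcal{L}\{\tau; z\} - \tau(0)$, a direct computation yields
\[
\mathcal{L}\{dS; s\} = \frac{a}{s-1} + s\, \mathcal{L}\{\tau; s-1\} - \tau(0), \qquad \Real s > 1,
\]
and since $\mathcal{L}\{\tau; \cdot\}$ is entire, so is $\mathcal{L}\{dS; s\} - a/(s-1)$. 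Moreover $S(x) - ae^x = e^x \tau(x) - (a + \tau(0))$, whence
\[
\frac{|S(x) - ae^x|}{\rho(x) e^x} \;\ge\; \frac{|\tau(x)|}{\rho(x)} - \frac{a + |\tau(0)|}{\rho(x)\, e^x}.
\]
Along $x_n$, the lower bound $|\tau(x_n)| \ge n\, e^{-x_n}$ forces the subtracted term to be at most $(a + |\tau(0)|)/n$ times $|\tau(x_n)|/\rho(x_n)$, so it is negligible; combined with $|\tau(x_n)|/\rho(x_n) \ge |\tau(x_n)|/\tilde\rho(x_n) \to \infty$, this establishes $\limsup_{x\to\infty} |S(x) - ae^x|/(\rho(x) e^x) = \infty$.

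The principal obstacle is that $S$ must be non-decreasing, i.e., $a + \tau(x) + \tau'(x) \ge 0$ on $[0, \infty)$. Since $\|\tau'\|_{L^\infty} \le 1/2$, this forces $\tau$ to be bounded below on $[0,\infty)$, which Theorem \ref{th3 abserrorW-I} does not guarantee a priori. This can be handled either by refining the open-mapping argument underlying Theorem \ref{th3 abserrorW-I} so that it is performed within a closed affine subset of $V_{-\infty}$ whose elements satisfy $\tau \ge -M$ (the approximation properties of $V_{-\infty}$ developed in the main body of the paper ensuring the relevant inclusion still fails to be bounded), or by invoking the explicit construction from \cite{B-D-V-AbsenceRemWienerIkeharaConstructive}, where the produced $\tau$ is manifestly bounded. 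Once $\tau \ge -M$ is secured, setting $a := M + 1$ yields $a + \tau + \tau' \ge 1/2 > 0$ and completes the construction.
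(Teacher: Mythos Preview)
Your construction is essentially the paper's, but the ``principal obstacle'' you identify is illusory. Any $\tau\in V_{-\infty}$ is Lipschitz (since $\tau'=o(1)$ is bounded) and has Laplace transform extending entirely across the imaginary axis, so the Ingham--Karamata theorem itself (Theorem~\ref{Th: I-K}, or equivalently Lemma~\ref{l:Tauberian}) yields $\tau(x)\to 0$; in particular $\tau$ is bounded. Thus one may simply pick $a>0$ with $a+\tau(x)+\tau'(x)\ge 0$ for all $x\ge 0$, and both of your proposed workarounds---restricting the open-mapping argument to an affine subset (which would in any case clash with the linearity hypotheses of that theorem) or importing the explicit construction from \cite{B-D-V-AbsenceRemWienerIkeharaConstructive}---are unnecessary.

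Once boundedness is recognized, the auxiliary $\tilde\rho$ and the normalization $S(0)=0$ are also superfluous. The paper takes the simpler $S(x)=(\tau(x)+a)e^{x}$, for which $S(x)-ae^{x}=\tau(x)e^{x}$ exactly, and the limsup assertion follows directly from \eqref{eq:2I-K} with no correction term to control.
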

\begin{proof}
Let $\tau$ be a function satisfying the properties from Theorem \ref{th3 abserrorW-I}. Find $a > 0$ large enough such that $\tau(x)+\tau'(x)+a\geq 0$ for all $x\geq0$. The non-decreasing function $S(x)=\tau(x)e^{x}+ae^{x}$ then satisfies all requirements in view of \eqref{eq:2I-K} and the fact that its Laplace-Stieltjes transform is given by $\mathcal{L}\{dS;s\}=s \mathcal{L}\{\tau;s-1\}+a/(s-1)$.
\end{proof}

The plan of the article is as follows. Our main result, Theorem \ref{MainResult}, is stated in the next section, where we use some functional analysis arguments to reduce it to the intermediate approximation property stated in Proposition \ref{p:DensitySteps}. The proof of Proposition \ref{p:DensitySteps} is given in Section \ref{DenseRange}, while that of 
Theorem \ref{th3 abserrorW-I} is discussed in Section \ref{sect absence of remainders}.

\section{Statement of the main result}
\label{MainResults}

This section is dedicated to stating our main result. In preparation, we first need to introduce some notation and function spaces. 

We start by extending the definition of the space $V_{-\infty}$ given in the Introduction. Let $\beta\in [-\infty,0)=(-\infty,0)\cup\{-\infty\}$. We write $\H_{\beta} = \{ s \in \C \mid \Real{s} > \beta \}$ and then define
	\[ V_{\beta} = \{\tau \in C^{1}[0,\infty) \mid \tau \text{ satisfies \eqref{eq:ineq1} and } \L\{\tau;s\} \text{ has analytic extension to } \mathbb{H}_\beta\} . \]
We topologize $V_{\beta}$ with the fundamental family of norms $\{ \|\cdot\|_{\alpha, r} \mid \alpha > \beta , r > 0 \}$, see \eqref{eq:ineq2}, thereby inducing a Fr\'{e}chet space topology on it. Given fixed $\alpha<0$ and $r > 0$,  we also consider the space $V_{\alpha, r}$ consisting of those functions $\tau \in C^{1}[0,\infty)$ for which \eqref{eq:ineq1} holds and such that 
$\L\{\tau; s\}$ has analytic continuation to the open rectangle $R_{\alpha, r}$ and continuous extension to the closed rectangle $\overline{R}_{\alpha,r}$; 
it is a Banach space endowed with the norm \eqref{eq:ineq2}.

For $\alpha, \alpha_1, \alpha_2, \beta, \beta_1, \beta_2 <0$ and $r, r_1, r_2 > 0$, we obviously have the following natural continuous inclusions:
\begin{align*}
	V_{\beta_1} \subseteq V_{\beta_2} &\quad \Longleftrightarrow \quad \beta_1 \leq \beta_2 , \\
	V_{\alpha_1, r_1} \subseteq V_{\alpha_2, r_2} &\quad \Longleftrightarrow \quad \alpha_1 \leq \alpha_2 \text{ and } r_{1} \geq r_{2} , \\
	V_\beta \subseteq V_{\alpha, r} &\quad \Longleftrightarrow \quad \beta < \alpha.
\end{align*} 
The purpose of this paper is to show that, in fact, all of these inclusions are dense.
In particular, the density of $V_{-\infty}$ then plays a special role, being contained in every other space.
Our main result reads as follows.

\begin{theorem}\label{MainResult}
    The space $V_{-\infty}$ is dense in the spaces $V_{\alpha, r}$ and $V_{\alpha}$ for any $\alpha <0$ and $r > 0$.
\end{theorem}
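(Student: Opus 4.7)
\emph{Plan.} By a standard Fr\'echet-space argument, density of $V_{-\infty}$ in $V_\alpha$ follows from its density in each defining Banach seminorm, i.e., in every $V_{\alpha', r}$ with $\alpha<\alpha'<0$ and $r>0$. Hence the central task is to show, for an arbitrary $\tau\in V_{\alpha, r}$, that $\tau$ can be approximated in the norm $\|\cdot\|_{\alpha,r}$ by elements of $V_{-\infty}$. Since every $C^1$ function of compact support on $[0,\infty)$ automatically belongs to $V_{-\infty}$ (its derivative vanishes outside a compact set, and its Laplace transform is entire), the natural goal is to produce a sequence of such compactly supported functions approximating $\tau$.

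\emph{Construction strategy.} The hypothesis $\tau'(x) = o(1)$ makes it easy to approximate $\tau'$ in $L^\infty$ by compactly supported continuous functions: if $\phi_n$ is a smooth cutoff equal to $1$ on $[0,n]$ and vanishing outside $[0,n+1]$, then the primitive $\tau_n^\sharp(x) = \tau(0) + \int_0^x \phi_n(t)\tau'(t)\,dt$ has derivative converging uniformly to $\tau'$ and is eventually equal to some constant $c_n$. To obtain a compactly supported function, one subtracts a suitably shaped correction term — the ``step'' profile of Proposition~\ref{p:DensitySteps} — that gradually descends from $c_n$ to $0$. The parameters of this step must be chosen to balance two competing demands: they must be spread out enough to keep $\|\tau_n' - \tau'\|_\infty$ small, yet tuned so that the Laplace-transform error also decays uniformly on $\overline{R}_{\alpha,r}$.

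\emph{Main obstacle.} The real difficulty lies in controlling $\sup_{s \in \overline{R}_{\alpha,r}} |\L\{\tau_n - \tau;s\}|$. On the right edge $\Real s = 1$, the difference is given by an absolutely convergent integral and is readily estimated. However, on the remaining three edges of $\overline{R}_{\alpha,r}$ one has $\Real s \leq 0$, and the Laplace transform of $\tau - \tau_n$ is defined only via analytic continuation and is \emph{not} the divergent integral $\int_0^\infty(\tau-\tau_n)(x)e^{-sx}\,dx$. Any entire $\L\{\tau_n\}$ must therefore approximate the analytic extension of $\L\{\tau\}$, which depends on $\tau$ in a non-local way. A crude truncation typically fails: at $s=0$, for instance, $\L\{\tau_n\}(0) = \int_0^\infty \tau_n(x)\,dx$ tends to grow with $n$ for a naive cutoff of $\tau$, whereas the target $\L\{\tau\}(0)$ is fixed. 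The content of Proposition~\ref{p:DensitySteps} is precisely the design of the step profile so as to meet both the time-domain and frequency-domain demands simultaneously; the passage from decay on $\Real s = 1$ to uniform decay on $\overline{R}_{\alpha,r}$ should then follow via the maximum principle for the rectangle, together with the integral estimates available on the right edge and the a priori continuity of the extension of $\L\{\tau\}$ on $\overline{R}_{\alpha,r}$.
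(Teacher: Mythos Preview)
Your proposal rests on a misreading of Proposition~\ref{p:DensitySteps}. That proposition is not a construction of ``step profiles'' for compactly supported approximation; it is the statement that $V_\beta$ is dense in $V_{\alpha,r}$ whenever $-\infty<\beta<\alpha<0$. In other words, Proposition~\ref{p:DensitySteps} is essentially the theorem itself with $\beta$ finite, and the passage from it to Theorem~\ref{MainResult} is a short projective-limit argument (De Wilde's lemma). So when you write that ``the content of Proposition~\ref{p:DensitySteps} is precisely the design of the step profile,'' you are invoking, as a black box, exactly the density statement you are trying to prove. The obstacle you correctly identify --- that a naive cutoff gives no control of $\mathcal{L}\{\tau_n\}$ on the portion of $\overline{R}_{\alpha,r}$ with $\Real s\le 0$, and in particular $\mathcal{L}\{\tau_n\}(0)=\int_0^\infty\tau_n$ need not converge --- is genuine, and your sketch offers no mechanism to overcome it. The maximum-principle idea does not work as stated: smallness on the right edge together with mere continuity on the rest of $\partial R_{\alpha,r}$ does not force smallness on the interior.

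The paper's actual route is entirely different and does \emph{not} approximate by compactly supported functions. After some preliminary reductions (dilation, multiplication by a function with compactly supported Fourier transform, mollification) to reach a nice subclass $Y\subset\mathcal{S}(\mathbb{R}_+)$, the key idea is a duality argument: one writes $\tau_h(x)=e^{-hx}\tau(x)$ as a contour integral $\tau_h=\frac{1}{2\pi i}\int_{\Gamma_h}\mathcal{L}\{\tau_h;\zeta\}\,e_\zeta\,d\zeta$ in the Banach space $V_{\alpha,r}$, where $e_\zeta(x)=e^{\zeta x}$. One then shows that $\zeta\mapsto e_\zeta$ is holomorphic from $\mathbb{C}_-\setminus R$ into $V_{\alpha,r}$, so any continuous functional vanishing on $\operatorname{span}\{e_\zeta:\zeta\in D\}$ for a set $D\subset\{\Real\zeta<\beta\}$ with an accumulation point must vanish on all $e_\zeta$ and hence on $\tau_h$; Hahn--Banach then gives $\tau_h\in\overline{\operatorname{span}\{e_\zeta:\zeta\in D\}}\subset\overline{V_\beta}$. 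The approximants $e_\zeta$ are decaying exponentials, not compactly supported, and the Laplace-transform control is automatic because $\mathcal{L}\{e_\zeta;s\}=(s-\zeta)^{-1}$ is explicit.
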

 
The largest share of the work will be in showing the following intermediate step, whose proof will be given in Section \ref{DenseRange}. 

\begin{proposition}\label{p:DensitySteps}
	The space $V_{\beta}$ is dense in $V_{\alpha, r}$ for every $-\infty < \beta < \alpha<0$ and $r > 0$.
\end{proposition}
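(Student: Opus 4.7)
The natural first attempt is a smooth large-scale cutoff. Given $\tau \in V_{\alpha,r}$ and $\varepsilon>0$, choose $\chi_n \in C^\infty$ with $\chi_n \equiv 1$ on $[0,n]$, $\chi_n \equiv 0$ on $[2n,\infty)$, and $\|\chi_n'\|_\infty = O(1/n)$. The function $\chi_n\tau$ has compact support, hence entire Laplace transform, so $\chi_n\tau \in V_{-\infty}\subseteq V_\beta$. If $\tau$ is bounded on $[0,\infty)$---which should follow from the analyticity of $\L\{\tau;s\}$ on a neighbourhood of an interval of the imaginary axis together with $\tau'=o(1)$, via a local version of Theorem \ref{Th: I-K} applied to $\tau$ itself---then
\[
\|(\tau-\chi_n\tau)'\|_\infty \le \sup_{x\ge n}|\tau'(x)| + \|\tau\|_\infty\|\chi_n'\|_\infty \xrightarrow{n\to\infty} 0.
\]
However, this naive cutoff fails in the Laplace-transform seminorm: on the left edge $\Real s=\alpha<0$ of $\overline R_{\alpha,r}$, one has $|\L\{\chi_n\tau;s\}|=O(n\,e^{2|\alpha|n})$, whereas $|\L\{\tau;s\}|$ stays bounded there by hypothesis.

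To compensate, add a correction term $h_n\in V_\beta$ and instead take $\widetilde\tau_n := \chi_n\tau + h_n$, chosen so that $\L\{h_n;s\}$ closely matches $\L\{(1-\chi_n)\tau;s\}$ on $\overline R_{\alpha,r}$. The function $\L\{(1-\chi_n)\tau;\cdot\} = \L\{\tau;\cdot\}-\L\{\chi_n\tau;\cdot\}$ is continuous on $\overline R_{\alpha,r}$ and analytic on its interior; by a Runge/Mergelyan-type theorem it can be approximated uniformly on $\overline R_{\alpha,r}$ by rational functions $R(s)=\sum_j c_j/(s-p_j)^{k_j}$ with poles $p_j\in\{\Real w\le\beta\}$. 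Each such $R$ is the Laplace transform of $h_n(x)=\sum_j c_j\,x^{k_j-1}/(k_j-1)!\,e^{p_jx}$, and since $e^{p_jx}$ has Laplace transform $1/(s-p_j)$ analytic on $\H_\beta$, we get $h_n \in V_\beta$.

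The main obstacle will be to simultaneously control $\|h_n'\|_\infty$: the rational approximation provides only Laplace-domain closeness, while we need time-domain regularity of $h_n$. A workable strategy is to realise $h_n$ via a contour integral
\[
h_n(x)=\frac{1}{2\pi i}\int_\Gamma \Phi(w)\,e^{wx}\,dw,
\]
where $\Gamma\subset\{\Real w\le\beta\}$ is a suitable contour and $\Phi$ is built from the boundary values of $\L\{\tau;s\}$ on $\partial R_{\alpha,r}$ together with the entire correction $\L\{\chi_n\tau;s\}$. The factor $e^{wx}$ decays along $\Gamma$ (since $\Real w\le\beta<0$), which lets one bound both $\|h_n\|_\infty$ and $\|h_n'\|_\infty$ in terms of $\|\Phi\|_{L^1(\Gamma)}$ and the decay rate $|\beta|$. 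Supplementary decay of $\L\{\tau;s\}$ along vertical lines---obtained from the Riemann--Lebesgue lemma applied to the Fourier transform of $\tau'(x)e^{-x}$, using $\tau'=o(1)$---may be needed for the contour integrals to converge with the required estimates. Calibrating the scale $n$, the contour $\Gamma$, and the approximation quality so that both halves of $\|\tau-\widetilde\tau_n\|_{\alpha,r}$ drop below $\varepsilon$ is the crux of the argument.
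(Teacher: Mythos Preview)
Your diagnosis of why the naive cutoff fails is correct, and adding a correction $h_n\in V_\beta$ is a natural idea, but the proposal stops exactly at the hard part: you yourself flag that controlling $\|h_n'\|_\infty$ is ``the main obstacle'' and that the final calibration is ``the crux of the argument'', and neither is carried out. There are concrete obstructions to closing the outline as written. For the Mergelyan route, uniform approximation of $\L\{(1-\chi_n)\tau;\cdot\}$ on $\overline R_{\alpha,r}$ gives no control on the size of the coefficients $c_j$, so $\|h_n'\|_\infty$ can be arbitrarily large. For the contour-integral route, you place $\Gamma\subset\{\Real w\le\beta\}$, but $\L\{\tau;\cdot\}$ is only given on $R_{\alpha,r}\cup\H_0$, so there is no natural $\Phi$ to integrate against on $\Gamma$; and the Riemann--Lebesgue decay you invoke yields at best $O(1/|t|)$ on vertical lines, which is not integrable, so the relevant integrals diverge for a general $\tau\in V_{\alpha,r}$ without further preprocessing.

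The paper takes a genuinely different route that sidesteps both problems. First, it reduces (via dilation, multiplication by a band-limited function, and convolution with a compactly supported mollifier) to $\tau\in\S(\R_+)$ whose Laplace transform is analytic on a strictly larger rectangle $R'$ and rapidly decaying on vertical lines; after a further shift $\tau_h(x)=e^{-hx}\tau(x)$ this makes an integral representation $\tau_h=\frac{1}{2\pi i}\int_{\Gamma_h}\L\{\tau_h;\zeta\}\,e_\zeta\,d\zeta$ absolutely convergent \emph{in the Banach space} $V_{\alpha,r}$, with $\Gamma_h$ running along $-h+i\R$ and the left part of $\partial R'$ (so $\Gamma_h$ is not pushed into $\{\Real w\le\beta\}$). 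Second --- and this is the key idea your outline is missing --- no explicit approximant is ever constructed. Instead one observes that $\zeta\mapsto e_\zeta$ is a $V_{\alpha,r}$-valued holomorphic map on the connected domain $\C_-\setminus R$, so by Hahn--Banach any continuous functional annihilating $\{e_\zeta:\Real\zeta<\beta\}$ must, by analytic continuation, vanish on all of $\Gamma_h$ and hence on $\tau_h$. This duality/analytic-continuation device is precisely what replaces the explicit $\|h_n'\|_\infty$ control that your sketch could not supply.
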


Let us now end this section by explaining how this particular case can directly be used to prove Theorem \ref{MainResult}. 
To this end, we recall some classical facts about projective spectra of topological vector spaces.
Let $(X_n)_{n \in \N}$ be a sequence of topological vector spaces and let $u^{n+1}_{n} : X_{n+1} \to X_n$ be a continuous linear mapping for each $n \in \N$. Consider the projective limit of the spectrum $(X_n)_{n \in \N}$, that is,
\[X=\left\{(x_n)_{n\in \mathbb{N}}\in \prod_{n \in \N} X_n\, \Big\vert \, x_n=u^{n+1}_{n}(x_{n+1}), n\in\N\right\},\]
with the natural projection mappings  $u_j : X \to X_j : (x_n)_{n \in \N} \to x_j$ and the projective topology with respect to them. The spectrum is called \emph{reduced} if $u_j$ has dense range for each $j \in \N$. 
We will use the following result due to De Wilde.

	\begin{lemma}[{\cite{DW-CritDensSepLimProjInd}}]
		\label{l:DeWilde}
		 Let $(X_n)_{n\in\N}$ be a spectrum of complete metrizable topological vector spaces. If $u_n^{n+1}$ has dense range for each $n\in\N$, then the spectrum $(X_{n})_{n\in\N}$ is reduced.
	\end{lemma}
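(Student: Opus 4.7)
The plan is the classical Mittag-Leffler style procedure for projective limits: fix $n_0 \in \N$, $x \in X_{n_0}$, and $\varepsilon > 0$, and construct an element $(y_n)_n \in X$ with $d_{n_0}(y_{n_0}, x) \leq \varepsilon$, where $d_m$ denotes a translation-invariant metric compatible with the topology of $X_m$ (available by metrizability). Writing $v^m_k := u^{m+1}_m \circ \cdots \circ u^k_{k-1}$ for the canonical map $X_k \to X_m$ when $m \leq k$, the core task is to build a sequence $a_{n_0+k} \in X_{n_0+k}$, $k \geq 0$, with $a_{n_0} = x$, such that for every fixed $m \geq n_0$ the ``pushed-down'' sequence $(v^m_{n_0+j}(a_{n_0+j}))_{j \geq m - n_0}$ is Cauchy in $X_m$.

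To do this inductively, suppose $a_{n_0}, \dots, a_{n_0+k}$ have already been selected. Since the finitely many maps $v^m_{n_0+k} \colon X_{n_0+k} \to X_m$ for $m = n_0, \dots, n_0+k-1$ are continuous at $a_{n_0+k}$, one can find $\delta_k \in (0, \varepsilon/2^{k+1})$ so small that any point within $\delta_k$ of $a_{n_0+k}$ is mapped into the $\varepsilon/2^{k+1}$-ball around $v^m_{n_0+k}(a_{n_0+k})$ in $X_m$ for each such $m$. Using the dense range hypothesis on $u^{n_0+k+1}_{n_0+k}$, pick $a_{n_0+k+1} \in X_{n_0+k+1}$ with $d_{n_0+k}(u^{n_0+k+1}_{n_0+k}(a_{n_0+k+1}), a_{n_0+k}) < \delta_k$. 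By design, for every $m \leq n_0 + k$ the consecutive terms $v^m_{n_0+k}(a_{n_0+k})$ and $v^m_{n_0+k+1}(a_{n_0+k+1}) = v^m_{n_0+k}(u^{n_0+k+1}_{n_0+k}(a_{n_0+k+1}))$ differ by at most $\varepsilon/2^{k+1}$ in $d_m$, so the pushdown sequence is indeed Cauchy. Completeness of $X_m$ then delivers a limit $y_m \in X_m$ for every $m \geq n_0$; for $m < n_0$, set $y_m := v^m_{n_0}(y_{n_0})$.

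Consistency $y_m = u^{m+1}_m(y_{m+1})$ for all $m$ follows, for $m \geq n_0$, by passing to the limit in the identity $u^{m+1}_m \circ v^{m+1}_{n_0+j} = v^m_{n_0+j}$, and is definitional for $m < n_0$. Thus $y = (y_m)_m \in X$, and a telescoping estimate gives $d_{n_0}(y_{n_0}, x) \leq \sum_{k=0}^{\infty} \varepsilon/2^{k+1} = \varepsilon$, establishing the density of the range of $u_{n_0}$.

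The main obstacle is the diagonal bookkeeping inside the induction: at each step the scalar $\delta_k$ must be chosen small with respect to \emph{all} finitely many previously fixed levels $m \leq n_0 + k - 1$ simultaneously, as otherwise Cauchyness of the pushdown sequences at the lower levels would be lost. Metrizability of the $X_n$ is precisely what makes the required countable family of continuity estimates tractable, and completeness is indispensable to extract the limits $y_m$ from the Cauchy sequences produced.
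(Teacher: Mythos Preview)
Your argument is correct; it is the standard Mittag-Leffler procedure for projective spectra of complete metrizable spaces. The paper itself does not supply a proof of this lemma at all: it is merely quoted from De Wilde's article \cite{DW-CritDensSepLimProjInd} and used as a black box, so there is no ``paper's own proof'' to compare against. Your write-up therefore goes beyond what the paper provides, and the diagonal bookkeeping you flag (choosing $\delta_k$ small enough to control all previously fixed levels simultaneously) is indeed the one nontrivial point in the argument.
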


	\begin{proof}[Proof of Theorem \ref{MainResult}]
		First, we observe that $V_{\beta}$ is dense in $V_{\alpha}$ whenever $\beta < \alpha$. Indeed, this directly follows  from Proposition \ref{p:DensitySteps} as $V_{\alpha}$ is isomorphic to the projective limit of $(V_{\alpha + \frac{1}{n + 1}, n+1})_{n \in \N}$ with natural inclusions.
		Then, Lemma \ref{l:DeWilde} yields that $V_{-\infty}$ is dense in $V_{\beta}$ for any $\beta \in \R$ as $V_{-\infty}$ is isomorphic to the projective limit of the spectrum $(V_{\beta - n})_{n \in \N}$ with natural inclusions. 
		Hence, we find the chain of dense inclusions $V_{-\infty} \subseteq V_{\beta} \subseteq V_{\alpha, r}$ whenever $\beta < \alpha<0$ and $r > 0$.
	\end{proof}

\section{The proof of Proposition \ref{p:DensitySteps}}\label{DenseRange}

In this section, we consider the proof of Proposition \ref{p:DensitySteps} and, as a consequence, we will complete that of Theorem \ref{MainResult}. We fix throughout this section $\beta<\alpha<0$ and $r > 0$ and set $R = \overline{R}_{\alpha,r}$.

We start by making a reduction. Consider the subspace
	\[ V^{0}_{\alpha, r} = \{ \tau \in V_{\alpha, r} \mid \tau(0) = \tau'(0) = 0 \} . \]
To show Proposition \ref{p:DensitySteps}, it suffices to prove that $V^{0}_{\alpha, r}$ is contained in the closure of $V_{\beta}$ in $V_{\alpha, r}$.
Indeed, note that any $\tau \in V_{\alpha, r}$ can be written as $\tau = \tau_{0} + \tau_{1}$ with $\tau_{0} \in V^{0}_{\alpha, r}$ and $\tau_{1}$ having compact support, so that in particular $\tau_{1} \in V_{-\infty}$, whence our claim follows.

Let us now introduce the following auxiliary space. Here, $\S(\R_{+})$ stands for the subspace of the Schwartz spaces $\S(\R)$ \cite{S-ThDist} whose elements have support contained in the half-line $[0, \infty)$.  
 \begin{multline*} 
 	Y = \{ \tau \in \S(\R_{+}) \mid \exists \alpha' < \alpha, ~ r' > r : \\ \L\{\tau;s\} \text{ has analytic extension to a neighborhood of } \overline{R}_{\alpha', r'} \} . 
\end{multline*}
We shall first show  that $Y$ is dense in $V^{0}_{\alpha, r}$. For it, one of our approximation steps makes use of the next result, which is a particular case of 
\cite[Theorem 5.3]{D-V2019}.

\begin{lemma}[{\cite{D-V2019}}] \label{l:Tauberian} Let $\tau\in V_{\alpha,r}$. Then\footnote{The conclusion $\tau\in L^{\infty}(\mathbb{R})$ actually suffices for our purposes and the latter follows from both \cite[Theorem 3.1]{D-V2019} and the finite form of the Ingham-Karamata theorem \cite{d-vOptIngham}}, $\lim_{x\to\infty}\tau(x)=0$.
\end{lemma}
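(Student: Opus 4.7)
The plan is to identify this as the finite (local) form of the Ingham--Karamata theorem and prove it via a truncated Newman contour argument followed by a slowly oscillating Tauberian upgrade. As preparation I note that $\tau'=o(1)$ in particular gives $\tau'\in L^\infty[0,\infty)$, so $\tau$ is Lipschitz on $[0,\infty)$ and $\tau(x)=O(x)$; hence $f(s):=\L\{\tau;s\}$ converges absolutely on $\{\Real s>0\}$ and, by hypothesis, extends analytically to $R_{\alpha,r}$ and continuously to $\overline{R}_{\alpha,r}$. For any $0<r'<r$ one can then find a small $\delta>0$ such that $f$ is holomorphic in an open neighborhood of the closed rectangle $[-\delta,\delta]\times[-r',r']$, in particular across the vertical segment $\{i\theta:|\theta|\le r'\}$.

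The main step is to prove $f_T(0)\to f(0)$ as $T\to\infty$, where $f_T(s):=\int_0^T\tau(t)e^{-st}\,dt$ is entire. I would pick a closed contour $\Gamma$ lying inside the region of analyticity, enclosing the origin exactly once, with right portion $\Gamma^+\subset\{\Real s>0\}$ and left portion $\Gamma^-\subset\{\Real s\le 0\}$ (for instance a smooth deformation of the rectangle above). The residue theorem gives, for any entire factor $K$ with $K(0)=1$,
\[
f(0)-f_T(0)=\frac{1}{2\pi i}\oint_\Gamma \bigl(f(s)-f_T(s)\bigr)e^{sT}K(s)\,\frac{ds}{s}.
\]
On $\Gamma^+$, one has $f(s)-f_T(s)=\int_T^\infty\tau(t)e^{-st}\,dt$; an integration by parts exploiting $\tau'\in L^\infty$ yields an extra factor of $1/s$ which, combined with a suitable choice of $K$, bounds the $\Gamma^+$-contribution by $O(1/T)$. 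On $\Gamma^-$ one uses $|e^{sT}|\le 1$, the continuity of $f$ and $f_T$ on $\overline{R}_{\alpha,r}$, and dominated convergence to let $T\to\infty$ and make that contribution vanish. Combining these two estimates yields that $\int_0^\infty\tau(t)\,dt$ converges to $f(0)$.

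To pass from integral convergence to pointwise decay I would invoke the standard slowly oscillating Tauberian step: since $\tau'\to 0$, $\tau$ is slowly oscillating at infinity, so if $|\tau(x_n)|\geq 2\eta>0$ along some sequence $x_n\to\infty$, then Lipschitz continuity forces $|\tau|\geq \eta$ on intervals of fixed positive length around each $x_n$, violating the Cauchy criterion for the (now convergent) integral $\int_0^\infty\tau$. Hence $\tau(x)\to 0$.

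The principal obstacle is the truncated-contour estimate: in the classical Ingham--Karamata proof one enlarges the contour radius $R\to\infty$ to absorb the error, but here $R$ is bounded by $r$. It is precisely the extra regularity $\tau'\in L^\infty$, exploited via integration by parts on $\Gamma^+$, that supplies the cancellation making the right-half contribution negligible as $T\to\infty$; this is the technical heart of the finite Ingham--Karamata theorem developed in \cite{D-V2019} and \cite{d-vOptIngham}.
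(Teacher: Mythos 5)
The paper does not prove this lemma at all; it simply cites it as a particular case of \cite[Theorem 5.3]{D-V2019}, and the footnote notes that even the weaker conclusion $\tau\in L^\infty$ (from \cite[Theorem 3.1]{D-V2019} or the finite Ingham--Karamata theorem of \cite{d-vOptIngham}) would suffice for the paper's purposes. You attempt a self-contained proof, which is a legitimate goal, but as written it has gaps.

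The main problem is that you nowhere establish that $\tau$ is bounded, and your contour estimates quietly require it. After integration by parts on $\Gamma^+$ you obtain
\[
f(s)-f_T(s)=\frac{\tau(T)e^{-sT}}{s}+\frac{1}{s}\int_T^\infty\tau'(t)e^{-st}\,dt ,
\]
and the boundary term contributes $\tau(T)\,K(s)/s^2$ to the integrand. A priori $\tau(T)=O(T)$ (only $\tau'=o(1)$ is given), so the $\Gamma^+$-contribution is not $O(1/T)$ as you claim; it contains a term of size $|\tau(T)|\cdot\bigl|\int_{\Gamma^+}K(s)s^{-2}\,ds\bigr|$ which does not decay. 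This term can in fact be cancelled, but only by pairing it against the corresponding boundary term from the $f_T$ piece on $\Gamma^-$ and choosing a kernel with $K'(0)=0$; your sketch makes no mention of this cancellation and treats $\Gamma^+$ in isolation. Similarly, your $\Gamma^-$ estimate ("$|e^{sT}|\le1$, continuity, and dominated convergence") is fine for $\int_{\Gamma^-}f(s)e^{sT}K(s)s^{-1}\,ds$, but not for the $f_T$ part: for $\Real s<0$, $f_T(s)=\int_0^T\tau(t)e^{-st}\,dt$ grows without bound as $T\to\infty$, and after the usual Newman deformation the estimate $|f_T(s)e^{sT}|\le\|\tau\|_\infty/|\Real s|$ again uses boundedness of $\tau$. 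Finally, even if everything were bounded, a finite contour of height $r$ only gives $\limsup_T|f_T(0)-f(0)|\lesssim\|\tau\|_\infty/r$, not convergence; closing this gap requires an additional iteration (applying the estimate to the tails $\tau(\cdot+T)$), which you do not carry out.

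A route that avoids all of this, and which the paper's footnote essentially indicates, is the following: first invoke the finite form of the Ingham--Karamata theorem to conclude $\tau\in L^\infty$, and then apply that same finite form to the translates $\tau_T(x)=\tau(x+T)$, which still lie in $V_{\alpha,r}$ and satisfy $\|\tau_T'\|_{L^\infty}=\sup_{x\ge T}|\tau'(x)|\to0$. Since the finite form bounds $\limsup_{x\to\infty}|\tau_T(x)|$ by a constant multiple of $\|\tau_T'\|_{L^\infty}$, and $\limsup_{x\to\infty}|\tau_T(x)|=\limsup_{x\to\infty}|\tau(x)|$, letting $T\to\infty$ gives $\tau(x)\to0$ directly, without needing to prove that $\int_0^\infty\tau$ converges. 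Your concluding "slowly oscillating" step is sound as far as it goes (though one should apply it to real and imaginary parts separately), but the input it relies on has not been established by your contour argument.
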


\begin{lemma}\label{LemmaMisDense}
    $Y$ is dense in $V^{0}_{\alpha, r}$.
\end{lemma}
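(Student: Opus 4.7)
The plan is to combine a standard mollification reduction with a subtler truncation. First, convolve $\tau$ with a smooth mollifier $\varphi_\epsilon$ supported in $[0,\epsilon]$: extending $\tau$ by zero to $(-\infty,0)$ gives a $C^1$ function on $\R$ (using $\tau(0)=\tau'(0)=0$), and the convolution $\tau*\varphi_\epsilon\in C^\infty$ lies in $V^0_{\alpha,r}$. Convergence in $\|\cdot\|_{\alpha,r}$ is routine: $\tau'*\varphi_\epsilon\to\tau'$ uniformly since $\tau'\in C_0[0,\infty)$ is uniformly continuous, and $\L\{\tau*\varphi_\epsilon;s\}=\L\{\tau;s\}\L\{\varphi_\epsilon;s\}\to\L\{\tau;s\}$ uniformly on the compact set $\overline{R}_{\alpha,r}$ because $\L\{\varphi_\epsilon\}$ is entire and tends to $1$ uniformly on compacts as $\epsilon\to 0^+$. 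Thus one may assume $\tau\in C^\infty$.

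The main difficulty---precisely the gap identified in \cite{D-V-NoteAbsenceRemWienerIkehara}---is that the obvious next step of multiplying by a smooth compactly supported cutoff $\chi_N$ fails. Although $\L\{\tau\chi_N;s\}$ is entire, explicit examples with oscillatory asymptotics at frequencies $|\omega|>r$ (built from $\sin(kx)$-type blocks with $k>r$) show that $|\L\{\tau\chi_N;s\}|$ can grow like $e^{|\alpha|N}$ on the left edge $\Real s=\alpha$ of $\overline{R}_{\alpha,r}$, while $\L\{\tau;s\}$ is bounded there by continuous extension. Hence $\|\tau-\tau\chi_N\|_{\alpha,r}$ does not tend to zero, and the correct approximation must truncate in a manner that respects the analytic continuation of $\L\{\tau\}$ into the rectangle.

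My strategy for the correct construction is to use an inverse Laplace representation along a contour deformed into $R_{\alpha,r}$. For parameters $\alpha_0\in(\alpha,0)$, $\rho\in(0,r)$, $c>0$, and $T>\rho$, let $\Gamma_T$ be the piecewise linear contour running along $\Real s=\alpha_0$ for $|\Imaginary s|\le\rho$ and along $\Real s=c$ for $\rho\le|\Imaginary s|\le T$, joined by horizontal segments at $|\Imaginary s|=\pm\rho$; this curve lies in the analyticity region $R_{\alpha,r}\cup\{\Real s>0\}$ of $F=\L\{\tau;\cdot\}$. Contour deformation---legitimate for smooth $\tau$ thanks to decay of $F$ on vertical lines coming from $F(s)=\L\{\tau';s\}/s$ and repeated integration by parts---gives $\tau(x)=\lim_{T\to\infty}\tau_T(x)$, where $\tau_T(x)=\frac{1}{2\pi i}\int_{\Gamma_T}F(s)e^{sx}\,ds$. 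By Fubini, $\L\{\tau_T;\zeta\}$ equals the Cauchy-type integral $\frac{1}{2\pi i}\int_{\Gamma_T}F(s)/(\zeta-s)\,ds$, analytic on $\C\setminus\Gamma_T$ and, for suitable parameters, on a neighbourhood of $\overline{R}_{\alpha,r}$.

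The hard part will be to actually produce a Schwartz element of $Y$: the vertical arms of $\Gamma_T$ at $\Real s=c>0$ contribute an $e^{cx}$-growth in $\tau_T$, which must be cancelled by subtracting a compactly supported correction whose own Laplace transform contribution on $\overline{R}_{\alpha,r}$ is negligible. Convergence then has to be proved simultaneously for $\|\tau_T'-\tau'\|_{L^\infty}$ and $\sup_{\overline{R}_{\alpha,r}}|\L\{\tau_T;s\}-\L\{\tau;s\}|$ in a coupled limit sending $T\to\infty$, $\alpha_0\to\alpha$, and $\rho\to r$. The required bounds come from oscillatory-integral estimates on the vertical arms of the contour, leveraging polynomial (or faster) decay of $F$ along vertical lines, which for smooth $\tau$ is guaranteed by repeated integration by parts using $\tau(0)=\tau'(0)=0$.
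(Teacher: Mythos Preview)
Your proposal does not close the gap; the contour construction as written cannot produce elements of $Y$.

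First, the exponential growth you flag is fatal, and your proposed remedy is incoherent: a compactly supported correction is identically zero for large $x$, so it cannot cancel a term that behaves like $e^{cx}$ as $x\to\infty$. Consequently $\tau_T$ (or any compactly supported perturbation of it) is never in $\mathcal{S}(\R_+)$, hence never in $Y$. Second, and more structurally, membership in $Y$ requires analytic extension of the Laplace transform to a \emph{larger} rectangle $\overline{R}_{\alpha',r'}$ with $\alpha'<\alpha$ and $r'>r$. Your contour $\Gamma_T$ lives at $\Real s=\alpha_0>\alpha$ and $|\Imaginary s|=\rho<r$, i.e.\ strictly \emph{inside} the original rectangle; the Cauchy-type integral $\zeta\mapsto\frac{1}{2\pi i}\int_{\Gamma_T}\frac{F(s)}{\zeta-s}\,ds$ is analytic only on $\C\setminus\Gamma_T$ and has a genuine jump (equal to $F$) across $\Gamma_T$, so it does not extend holomorphically across the contour, let alone to a neighbourhood of a strictly larger closed rectangle. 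Sending $\alpha_0\to\alpha$ and $\rho\to r$ does not help---it only pushes the discontinuity to the boundary of $R_{\alpha,r}$, where $F$ is merely continuous. In short, nothing in your construction enlarges the region of analyticity, which is the whole point.

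The paper's argument is quite different and avoids contour integrals entirely at this stage. It proceeds in three multiplicative/convolutive steps on the time side: (i) a dilation $\tau\mapsto\tau((1+\lambda)\,\cdot)$, which rescales the Laplace variable and pushes analyticity to an \emph{open neighbourhood} of $\overline{R}_{\alpha,r}$; (ii) multiplication by $\phi(\lambda x)$ with $\widehat{\phi}$ compactly supported, so that the Laplace transform becomes a short convolution in the imaginary direction (preserving analyticity on a slightly larger rectangle) while the product acquires Schwartz decay, since $\tau$ is bounded by the Tauberian Lemma~\ref{l:Tauberian}; (iii) a final mollification to land in $\mathcal{S}(\R_+)$. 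The key idea you are missing is step~(ii): multiplying in $x$ by a band-limited Schwartz function simultaneously forces rapid decay and only smears the Laplace transform by a controlled amount, exactly what is needed to stay inside the enlarged rectangle produced by step~(i).
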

\begin{proof}
Our proof consists of three steps, each time strengthening the assumptions we may impose on the functions. Let us therefore fix some $\tau \in V^{0}_{\alpha, r}$ and $\varepsilon > 0$.

\underline{Step 1} : There exists a function $g \in C^{1}[0,\infty)$ for which  \eqref{eq:ineq1} holds, $g(0) = g'(0) = 0$, $\L\{g; s\}$ has analytic continuation to an open neighborhood $\Omega$ of $R$, and $\|\tau - g\|_{\alpha, r} \leq \varepsilon / 3$.
Define, for $\lambda > 0$, the function $\tau_{\lambda}(x) = \tau((1 + \lambda) x)$.
Then $\L\left\{\tau_{\lambda}; s\right\} = (1 + \lambda)^{-1} \L\{\tau; s/(1 + \lambda)\}$. Consequently, $\L\{\tau_{\lambda}; s\}$ has analytic extension to $(1 + \lambda) \mathring{R}$, an open neighborhood of \(R\).
Since $\tau'(x) = o(1)$, we obtain that
\[ \lim_{\lambda\to0^{+}} \| \tau' - \tau'_{\lambda} \|_{L^{\infty}} =0 . \]
On the other hand, we also clearly have $\lim_{\lambda\to0^{+}} \L\{\tau_\lambda; s\}= \L\{\tau; s\}$ uniformly for $s\in R$. Setting $g = \tau_{\lambda}$ gives the desired function if $\lambda$ is sufficiently small.

\underline{Step 2} : There exists a function $f \in C^{1}[0,\infty)$ for which \eqref{eq:ineq1} holds, $f(0) = f'(0) = 0$, $\L\{f; s\}$ has smooth extension to $i\R$ with the property that $\L\{f; it\}$ and all its derivatives are bounded, $\L\{f; s\}$ has analytic extension to a neighborhood of $R' = \overline{R}_{\alpha', r'} \supset R$, for certain $\alpha' < \alpha$ and $r' > r$, and $\|g - f\|_{\alpha, r} \leq \varepsilon / 3$.

Consider a function $\phi \in \S(\R)$ such that $\phi(0) = 1$ and its Fourier transform 
	\[ \widehat{\phi}(\xi)= \int_{-\infty}^{\infty} \phi(x)e^{-i \xi x}dx \]
 is an even function with compact support contained in $[-A, A]$ for $A > 0$.
Define for each $\lambda > 0$ the function $g_{\lambda}(x) = g(x) \phi_{\lambda}(x)$ with $\phi_\lambda(x)=\phi(\lambda x)$.
Then $\|g'-g'_\lambda\|_{L^\infty}\to0$ as $\lambda \to 0^{+}$.
Indeed, this follows immediately from the definition of $\phi_{\lambda}$, the property $g'(x) = o(1)$, and the fact that $g$ is bounded (by Lemma \ref{l:Tauberian}). 
Since $g_{\lambda}$ has rapid decay, its Fourier transform $\widehat{g}_{\lambda}(\xi)= \L\{g_{\lambda}; i\xi\}$ is smooth with bounded derivatives.
Let us now consider the analytic continuation. We have $\widehat{\phi}_{\lambda}(\xi) = \lambda^{-1} \widehat{\phi}(\xi / \lambda)$ and $\supp \widehat{\phi}_{\lambda} \subseteq [-\lambda A, \lambda A]$. 
Set $\Delta = \min_{x \in R} d(x,\partial \Omega) > 0$, then $\supp \widehat{\phi}_{\lambda} \subseteq [-\Delta / 2, \Delta / 2]$ for $\lambda$ small enough.
By Plancherel's theorem, for $\Real{s} > 0$, we obtain that
        \[ \L\{g_\lambda; s\} = \int^{\infty}_0 g(x)\phi_\lambda(x) e^{-sx} dx = \frac{1}{2 \pi \lambda} \int^{\Delta / 2}_{-\Delta / 2} \L\{g; s + i\xi\} \widehat{\phi}\left(\frac{\xi}{\lambda}\right) d\xi . \]
If we take $\alpha' = \alpha - \Delta / 4$ and $r' = r + \Delta / 4$, then the right-hand side of the above expression shows that there is a neighborhood of $R' = \overline{R}_{\alpha', r'}$ where $\L\{g_{\lambda}; s\}$ admits analytic continuation if $\lambda$ is small enough.
The family $\{ \L\{g_{\lambda}; s\} \mid \lambda \text{ small enough} \}$ is uniformly bounded on $\mathring{R}'$, since
        \begin{align*}
        \sup_{s \in \mathring{R}'} | \L\{g_{\lambda}; s\} |
        &= \frac{1}{2 \pi \lambda} \sup_{s \in R'} | \int^{\lambda A}_{-\lambda A} \L\{g; s + i\xi\}   \widehat{\phi}\left(\frac{\xi}{\lambda}\right)d\xi | \\
        &\leq \frac{A}{\pi} \| \widehat{\phi} \|_{L^{\infty}} \sup_{s \in [\alpha', 1] + i [-r-\frac{3\Delta}{4}, r+\frac{3\Delta}{4}]} |\L\{g; s\}| 
        <\infty ,
        \end{align*}
 as $[\alpha', 1]+i[-r-\frac{3\Delta}{4}, r+\frac{3\Delta}{4}]\subseteq \Omega$. 
 Now, by the mean-value theorem,
 \[
 |\L\{g_{\lambda}; s\} - \L\{g; s\}|\leq \lambda  \| g\|_{L^\infty} \|\phi'\|_{L^{\infty}}\int_{0}^{\infty}x e^{-x\Real{s}}dx\to 0, \]
for  $\Real{s} > 0$ as  $\lambda \to 0^{+}$. Then, we may conclude from Montel's theorem that
 	\[ \L\{g_{\lambda}; s\} \to \L\{g; s\}\qquad  \mbox{uniformly for }s\in R, \]
because $R$ is a compact subset of $\mathring{R}'$. Thus, we may take $f = g_{\lambda}$ for $\lambda$ small enough.

 \underline{Step 3} : There exists a function $\psi \in Y$ such that $\|f - \psi\|_{\alpha, r} \leq \varepsilon / 3$.
 
 In this final step, we take $\varphi \in C^{\infty}(\R)$ with compact support in $[0, \infty)$ such that $\int_{0}^{\infty} \varphi(x) dx = 1$. 
Consider then $\varphi_{\lambda}(x) = \lambda^{-1} \varphi(\lambda^{-1} x)$ for $\lambda > 0$ and the functions $f_{\lambda} = f * \varphi_{\lambda}$.
 Now, $\widehat{f}_{\lambda}(\xi) = \widehat{f}(\xi) \cdot \widehat{\varphi}_{\lambda}(\xi) \in \S(\R)$, so that $f_{\lambda} \in \S(\R_{+})$. Moreover, $\L\{f_{\lambda}; s\} = \L\{f; s\} \L\{\varphi_{\lambda}; s\}$ has analytic extension to a neighborhood of $R'$. Using that $f'(0) = 0$ and $f'(x)=o(1)$,
 one now easily verifies that $f_{\lambda} \to f$ as $\lambda \to 0^{+}$ in $V_{\alpha, r}$. Consequently, we may choose $\psi = f_{\lambda}$ for $\lambda$ small enough.
 
 By taking $g, f, \psi$ as in the previous steps with respect to some $\varepsilon > 0$, we obtain $\|\tau - \psi\|_{\alpha, r} \leq \varepsilon$ and our proof is complete. 
\end{proof}

In view of the previous lemma, to show Proposition \ref{p:DensitySteps} it suffices to show that $Y$ lies in the closure of $V_{\beta}$ in $V_{\alpha, r}$ for any $\beta < \alpha$. 
 We shall now introduce another approximation step that allows us to work with functions whose Laplace transforms have analytic extension to a certain concrete region containing the imaginary axis. Let $\tau \in Y$. Then, consider $\tau_{h}(x) = e^{-h x} \tau(x)$ for any $h > 0$. As $\L\{\tau_{h}; s\} = \L\{\tau; s + h\}$, it follows that $\L\{\tau_{h}; s\}$ has analytic extension to a neighborhood of $-h + \overline{R}_{\alpha', r'}$ as well as to $\H_{-h}$ with continuous extension to $\overline{\H}_{-h}$ for certain $\alpha' < \alpha$ and $r' > r$, which in particular contains the imaginary line.
Now, an easy calculation shows that $\tau_{h} \to \tau$ as $h \to 0^{+}$ in $V_{\alpha, r}$. Hence, we may just concern ourselves with the functions $\tau_{h}$ and we will always assume that $0<h < - \alpha'$. 

From now on we fix some $\tau \in Y$ and let $\beta < \alpha' < \alpha$ and $r' > r$ be such that $\L\{\tau; s\}$ has analytic extension to a neighborhood of $R' = \overline{R}_{\alpha', r'}$.
Let $\Gamma_h$, $h > 0$, be the contour obtained via the union of $-h + i(-\infty, -r']$, $\partial R' \cap \{s \in \C \mid \Real{s} \leq -h \}$, and $-h + i[r', \infty)$, oriented as in Figure \ref{Contour1Gev2}.
Note that $\Real{s} \leq -h < 0$ for any $s \in \Gamma_{h}$. Then $\tau_h$ has the following integral representation. 

\captionsetup[figure]{labelfont={bf},labelformat={default},labelsep=quad,name={Fig.}}
\begin{figure}[h!]
\centering
		\begin{minipage}{.5\textwidth}
  			\centering
  			\includegraphics[scale=0.55]{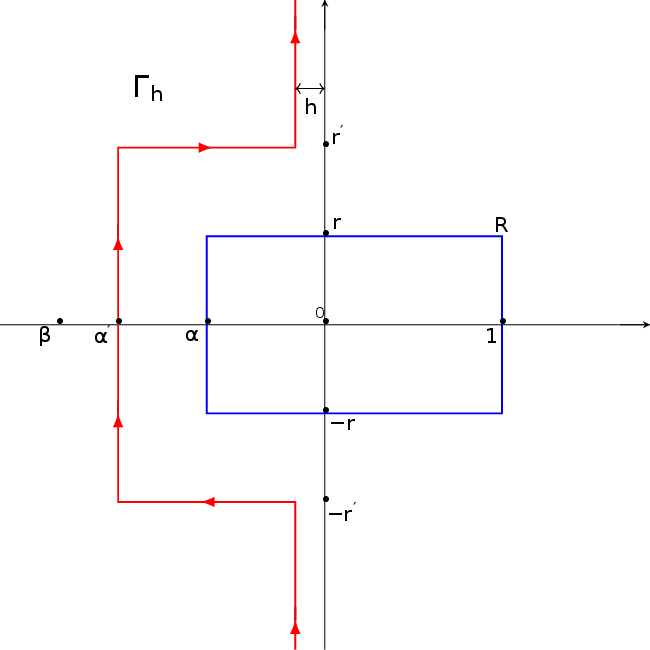}
			\caption{The contour $\Gamma_h$}
  		  			\label{Contour1Gev2}
		\end{minipage}
		\begin{minipage}{.45\textwidth}
			\centering
    			\includegraphics[scale=0.55]{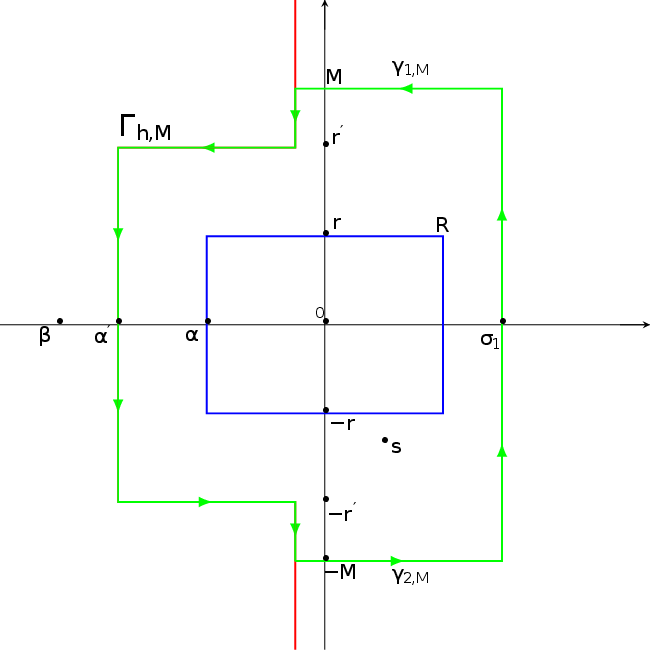}
			\caption{The contour $\Gamma_{h, M}$}
			\label{ContourGammahM}
		\end{minipage}
\end{figure}

\begin{lemma}\label{LemmaIntegralrepresentation}
For $h \geq 0$, we have
			\begin{equation}
				\label{IntegralTau}
				\tau_h(x) = \frac{1}{2 \pi i }\int_{\Gamma_h} \L\{\tau_h; \zeta\} e_{\zeta}(x)\, d\zeta,
			\end{equation}
where $e_{\zeta}(x) $ denotes the restriction of $e^{\zeta x}$ to $[0,\infty)$.
\end{lemma}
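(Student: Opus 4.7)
The plan is to start from the standard Bromwich/Fourier inversion formula on the imaginary axis and then deform the contour leftward onto $\Gamma_{h}$ via Cauchy's theorem.

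First, since $\tau \in Y \subseteq \S(\R_{+})$ and $h \geq 0$, the function $\tau_{h}(x) = e^{-hx}\tau(x)$ (extended by zero) also belongs to $\S(\R_{+})$, and its Fourier transform satisfies $\widehat{\tau_{h}}(\xi) = \L\{\tau_{h}; i\xi\}$. Applying Fourier inversion for Schwartz functions and substituting $\zeta = i\xi$ yields, for every $x \in \R$,
\[
\tau_{h}(x) \;=\; \frac{1}{2\pi i}\int_{-i\infty}^{i\infty} \L\{\tau_{h}; \zeta\}\, e^{\zeta x}\, d\zeta.
\]

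Second, I would deform the contour of integration from $i\R$ to $\Gamma_{h}$. Using $\L\{\tau_{h};\zeta\}=\L\{\tau;\zeta+h\}$ together with the definition of $Y$, the function $\L\{\tau_{h};\cdot\}$ is analytic in $\H_{-h}$ and in an open neighborhood of $-h + \overline{R}_{\alpha',r'}$. The closed region enclosed between $i\R$ and $\Gamma_{h}$---namely the two strips $\{-h \leq \Re\zeta\leq 0,\ |\Im\zeta|\geq r'\}$ together with the rectangle $\{\alpha'\leq \Re\zeta\leq 0,\ |\Im\zeta|\leq r'\}$---lies entirely in this analytic domain, using $0 < h < -\alpha' < 1$ to place the rectangular part inside $-h + \overline{R}_{\alpha',r'}$.

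Third, I would justify the deformation by truncating both contours at the heights $\Im\zeta = \pm T$ with $T > r'$, closing the resulting bounded region by two horizontal segments, and applying Cauchy's theorem. The closing segments lie in the strip $\sigma := \Re\zeta \in [-h, 0] \subseteq \overline{\H}_{-h}$; there $\L\{\tau_{h};\sigma+it\} = \L\{\tau;(\sigma+h)+it\}$ is the Fourier transform of the Schwartz function $x \mapsto e^{-(\sigma+h)x}\tau(x)$, hence decays faster than any polynomial in $t$, uniformly in $\sigma$. Combined with the trivial bound $|e^{\zeta x}|\leq 1$ for $\Re\zeta \leq 0$ and $x\geq 0$, this shows the two horizontal contributions are $O(T^{-k})$ for every $k$ and therefore vanish as $T\to\infty$. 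The identity
\[
\int_{-i\infty}^{i\infty}\L\{\tau_{h};\zeta\}e^{\zeta x}\,d\zeta \;=\; \int_{\Gamma_{h}}\L\{\tau_{h};\zeta\}e^{\zeta x}\,d\zeta
\]
then follows, and combining with the first step gives \eqref{IntegralTau}.

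The only mild technical point is the uniform Schwartz-type decay of $\L\{\tau_{h};\sigma+it\}$ on the horizontal closing segments, but this is painless here: the entire strip $\sigma \in [-h,0]$ lies within $\overline{\H}_{-h}$, where $\L\{\tau;\cdot+h\}$ is given by an absolutely convergent integral and iterated integration by parts produces the required rapid decay in $|t|$. No Phragm\'en--Lindel\"of argument is needed, since the piece of $\Gamma_{h}$ extending beyond $\Re\zeta = -h$ lies entirely at bounded height $|\Im\zeta|\leq r'$ and is irrelevant in the limit $T\to\infty$.
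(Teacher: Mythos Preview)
Your proof is correct and proceeds by a genuinely different route than the paper's. The paper does not start from Fourier inversion; instead it argues that it is enough to verify the Laplace-transform identity
\[
\L\{\tau_h;s\}=\frac{1}{2\pi i}\int_{\Gamma_h}\frac{\L\{\tau_h;\zeta\}}{s-\zeta}\,d\zeta,\qquad s\in R\cup\H_0,
\]
and then recovers \eqref{IntegralTau} by inverting the Laplace transform. To prove this identity, the paper applies Cauchy's integral formula on a bounded contour built from the truncated $\Gamma_{h,M}$, two horizontal segments at height $\pm M$ running from $\Re\zeta=-h$ out to a far-right vertical line $\Re\zeta=\sigma_1>1$, and that vertical line itself. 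It then shows the horizontal pieces vanish as $M\to\infty$ (rapid decay, same mechanism you use) and, in an additional step you avoid entirely, that the integral over $\sigma_1+i\R$ vanishes as $\sigma_1\to\infty$ via dominated convergence. Your approach is more direct: Fourier inversion on $i\R$ followed by a single leftward deformation to $\Gamma_h$, with no right-hand vertical line to dispose of. One small slip: you write $0<h<-\alpha'<1$, but the paper does not assume $-\alpha'<1$; your containment of the rectangular part in $-h+\overline{R}_{\alpha',r'}$ holds anyway since $\Re\zeta\le 0\le 1-h$ is automatic, so this does not affect the argument.
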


\begin{proof}  
It suffices to show that, for all $s \in R \cup \H_{0}$,
    \begin{equation}
    	\label{eq:IntegralRep}
        \L\{\tau_h; s\} = \frac{1}{2\pi i } \int_{\Gamma_h} \frac{\L\{\tau_h; \zeta\}}{(s-\zeta)}d\zeta.
    \end{equation}
In fact, we have
			\[ \L\{e_{\zeta}; s\} = \frac{1}{(s-\zeta)} \]
for $\zeta \in \C$ with $\Real{\zeta} \leq 0$. Then \eqref{IntegralTau} would directly follow by applying the inverse Laplace transform to \eqref{eq:IntegralRep}.
Fix $s \in R \cup \H_{0}$. Take any $\sigma_{1} > 1$ and $M > r'$.
Put $\Gamma_{h, M} = \Gamma_{h} \cap \{ s \in \C \mid |\Imaginary{s}| \leq M \}$, $\gamma_{1, M} = [-h, \sigma_{1}] + iM$, and $\gamma_{2, M} = [-h, \sigma_{1}] - iM$, where we use the orientation as in Figure \ref{ContourGammahM}.
Since $\L\{\tau_h; s\}$ is analytic on the domain at the right-hand side of the contour $\Gamma_h$ and continuous on its boundary, it follows from Cauchy's integral formula that
		\[  \L\{\tau_h; s\} = \frac{1}{2 \pi i} \Big(\int_{\Gamma_{h, M}} + \int_{\gamma_{1, M}} + \int_{\gamma_{2, M}} + \int_{\sigma_{1} + i[-M, M]}\Big) \frac{\L\{\tau_h; \zeta\}}{(\zeta-s)}  d\zeta . \]
Clearly,
		\[ \frac{1}{2 \pi i} \int_{\Gamma_{h, M}} \frac{\L\{\tau_h; \zeta\}}{(\zeta-s)} d\zeta \to \frac{1}{2 \pi i} \int_{\Gamma_{h}}\frac{\L\{\tau_h; \zeta\}}{(\zeta-s)} d\zeta , \qquad \text{as } M \to \infty . \]
Hence, to show \eqref{eq:IntegralRep}, we need to verify that the other integrals vanish as $M \to \infty$.
Since $\L\{\tau_{h}; s\}$ has uniform fast decay on the strip $\{ s \in \C \mid -h \leq \Real{s} \leq \sigma_{1} \}$, one sees that
		\[ \int_{\gamma_{j, M}} \frac{\L\{\tau_h; \zeta\}}{(\zeta-s)} d\zeta \to 0, \quad \text{ as } M \to \infty , \qquad j = 1, 2 .  \]
On the other hand, we obtain that
		\[ \lim_{M \to \infty} \int_{\sigma_{1} + i[-M, M]} \frac{\L\{\tau_h; \zeta\}}{(\zeta-s)} d\zeta = \int_{-\infty}^{\infty} \frac{\L\{\tau_h; \sigma_{1} + i\xi\}}{(\sigma_{1} + i\xi - s)}d\xi \]
is well-defined. As $\sigma_{1}$ was taken arbitrarily, the above expression is independent of $\sigma_1$. Furthermore, since $\sup_{\Real{s} \geq 0} |s \L\{\tau_{h}; s\}| < \infty$, an application of the Lebesgue dominated convergence theorem proves that
		\[ \int_{-\infty}^{\infty} \frac{\L\{\tau_h; \sigma_{1} + i\xi\}}{(\sigma_{1} + i\xi - s)} d\xi = \lim_{\sigma \to \infty} \int_{-\infty}^{\infty} \frac{\L\{\tau_h; \sigma + i\xi\}}{(\sigma + i\xi - s)} d\xi = 0 . \]
\end{proof}
	
 The following result shows that the mapping $\zeta \to e_{\zeta}$ is holomorphic on a suitable domain.
We write $\C_{-} = \{ s \in \C \mid \Real{s} < 0 \}$.

\begin{lemma}\label{LemmaAnalytic}
The vector-valued function 
	\begin{equation}
		\label{eq:HolomorphicMap} 
		\C_{-} \setminus R \to V_{\alpha, r}: \quad \zeta \to {e_{\zeta}} 
	\end{equation}
is holomorphic.
\end{lemma}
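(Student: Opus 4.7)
The plan is to show directly that the map $\zeta \mapsto e_\zeta$ is complex differentiable in the Banach-space norm of $V_{\alpha,r}$ at every point $\zeta_0 \in \C_-\setminus R$, with derivative equal to the element $f_{\zeta_0}\in V_{\alpha,r}$ defined by $f_{\zeta_0}(x) = x e^{\zeta_0 x}$. Since $V_{\alpha,r}$ is a Banach space, pointwise complex differentiability in norm is exactly holomorphy, so nothing else will be needed.

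First I would quickly verify that $e_\zeta$ (and, in passing, $f_\zeta$) actually lie in $V_{\alpha,r}$ for every $\zeta \in \C_-\setminus R$: since $\Real\zeta < 0$, both functions belong to $C^{1}[0,\infty)$ and their derivatives decay exponentially, hence are $o(1)$; and their Laplace transforms $\L\{e_\zeta;s\}=1/(s-\zeta)$ and $\L\{f_\zeta;s\}=1/(s-\zeta)^{2}$ are rational with a single pole at $\zeta \notin R$, so both extend analytically to a neighborhood of $R$.

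To handle the difference quotient, I would split $\|\cdot\|_{\alpha,r}$ into its two summands. On the Laplace transform side, a one-line partial-fractions computation gives
\[ \L\bigl\{(e_{\zeta_0+h} - e_{\zeta_0})/h - f_{\zeta_0}; s\bigr\} = \frac{h}{(s-\zeta_0-h)(s-\zeta_0)^{2}}. \]
Restricting $|h|$ to be smaller than half the (positive) distance from $\zeta_0$ to $R$ keeps the denominator uniformly bounded from below for $s \in R$, and so this part of the norm is $O(|h|)$. On the $L^{\infty}$ side, the difference quotient for the derivative simplifies to
\[ e^{\zeta_0 x}(e^{hx}-1) + \zeta_0 e^{\zeta_0 x}\Bigl(\tfrac{e^{hx}-1}{h} - x\Bigr), \]
and, using the elementary bounds $|e^{hx}-1|\leq |h|x\,e^{|h|x}$ and $|e^{hx}-1-hx|\leq \tfrac{1}{2}(|h|x)^{2}e^{|h|x}$ together with $|e^{\zeta_0 x}|=e^{\Real\zeta_0\,x}$, I would dominate it uniformly in $x \geq 0$. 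Restricting additionally $|h|<|\Real\zeta_0|/2$ ensures $\Real(\zeta_0+h)<\Real\zeta_0/2<0$, so the polynomial factors $x$ and $x^{2}$ are absorbed by the exponential decay, and this side is also $O(|h|)$.

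The only real obstacle I anticipate is this last $L^{\infty}$ estimate: one must match polynomial growth against exponential decay and be uniform in a punctured disc around $h=0$, which is why the preliminary shrinking of $|h|$ relative to $|\Real\zeta_0|$ is essential. The Laplace-transform side is essentially free from the explicit rational expression. Combining both bounds yields $\|(e_{\zeta_0+h}-e_{\zeta_0})/h - f_{\zeta_0}\|_{\alpha,r}=O(|h|)$, which gives complex differentiability at $\zeta_0$ and hence holomorphy of the map on $\C_-\setminus R$.
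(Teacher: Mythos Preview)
Your proposal is correct and follows essentially the same approach as the paper: both directly verify complex differentiability in the $V_{\alpha,r}$-norm with derivative $x\mapsto xe^{\zeta_0 x}$, splitting the norm into its $L^{\infty}$ and Laplace-transform parts and treating each separately. The only minor difference is that you use explicit Taylor-remainder bounds to obtain a quantitative $O(|h|)$ estimate on the $L^{\infty}$ side, whereas the paper employs an $\varepsilon$-splitting argument that only gives $o(1)$; either is sufficient.
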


\begin{proof}
Set $U = \C_{-} \setminus R$. Clearly $e_{\zeta} \in V_{\alpha, r}$ for each $\zeta \in U$.
Consider $\tilde{e}_{\zeta}(x) = xe^{\zeta x}$.
We will show that the derivative of the mapping \eqref{eq:HolomorphicMap} is $\tilde{e}_{\zeta}$ for each $\zeta\in U$.
Fix some $\zeta \in U$ and let $\eta \in \C \setminus \{0\}$ be such that $\zeta + \eta \in U$.
We have to show that:
	\begin{equation}
		\label{eq:DerivFLinfty} 
		\left\| \frac{e_{\zeta + \eta}-e_{\zeta}}{\eta} - \tilde{e}_{\zeta} \right\|_{\alpha, r} \to 0, \quad \text{as } \eta \to 0 . 
	\end{equation}
First,
\[ \left\| \frac{e'_{\zeta + \eta}-e'_{\zeta}}{\eta} - \tilde{e}'_{\zeta} \right\|_{L^{\infty}}\leq \sup_{x \geq 0} |e^{(\zeta+\eta)x}-e^{\zeta x}| + |\zeta| \sup_{x \geq 0}\left| \frac{e^{(\zeta + \eta) x} - e^{\zeta x}}{\eta} - x e^{\zeta x} \right| . \]
Each of these terms converges to $0$ as $\eta\to0$. For the second one, since $\Real{\zeta} < 0$, there exists some $x_{\varepsilon} > 1$ such that $|xe^{\zeta x}| \leq \varepsilon / 4$ for all $x \geq x_{\varepsilon}$.
Then, for $\eta$ small enough with $\Real{\eta} \leq - \Real{\zeta}$,
\begin{align*}
    \sup_{x \geq 0}\left| \frac{e^{(\zeta + \eta) x} - e^{\zeta x}}{\eta} - x e^{\zeta x} \right|
    &= \sup_{x \geq 0} e^{x \Real{\zeta}} \left| \int_{0}^{x} (e^{\eta t} - 1) dt \right| \\
    &\leq x_{\varepsilon} \sup_{t \in [0, x_{\varepsilon}]} |e^{\eta t} - 1| + 2\sup_{x > x_{\varepsilon}} x e^{x \Real{\zeta}}
    \leq \varepsilon.
\end{align*}
Likewise, $\lim_{\eta\to0}  \sup_{x \geq 0} \left| e^{(\zeta + \eta) x}- e^{\zeta x} \right|=0$, establishing our first claim.
  
We now have $\L\{\tilde{e}_{\zeta}; s\} = 1/(s-\zeta)^{2}$. Hence,
\[
\sup_{s \in R} \left| \frac{\L\{e_{\zeta + \eta}; s\} - \L\{e_{\zeta}; s\}}{\eta} -  \L\{\tilde{e}_{\zeta}; s\} \right| 
=|\eta| \sup_{s \in R}\left| \frac{1}{(s - \zeta-\eta)(s - \zeta)^{2}} \right|
\to0, \qquad \mbox{as }\eta \to 0.
\]

This shows \eqref{eq:DerivFLinfty} and completes our proof.
\end{proof}

Next, we show that the  integral (\ref{IntegralTau}) is convergent in \(V_{\alpha,r}\).

\begin{lemma}\label{LemmaAbsolutelyConvergent}
    The vector-valued integral 
    \[\tau_h = \frac{1}{2 \pi i} \int_{\Gamma_h} \L\{\tau_h; \zeta\}  e_\zeta \:d\zeta\]
     is absolutely convergent in the Banach space $V_{\alpha, r}$ for $h > 0$.
\end{lemma}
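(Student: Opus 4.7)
The plan is to verify the two standard hypotheses for absolute convergence of a Banach-valued contour integral: norm-continuity of the integrand $\zeta \mapsto \L\{\tau_h;\zeta\}\, e_\zeta$ on $\Gamma_h$, together with finiteness of the scalar integral $\int_{\Gamma_h} |\L\{\tau_h;\zeta\}|\,\|e_\zeta\|_{\alpha,r}\,|d\zeta|$. The whole argument hinges on two ingredients: a polynomial-growth bound for $\|e_\zeta\|_{\alpha, r}$ along $\Gamma_h$, and rapid (Schwartz) decay of $|\L\{\tau_h;\zeta\}|$ along the vertical tails.

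First I would handle continuity of the integrand. By Lemma \ref{LemmaAnalytic}, the map $\zeta \mapsto e_\zeta$ is holomorphic, hence norm-continuous, on $\C_- \setminus R$. A direct inspection of the three pieces of $\Gamma_h$ shows it lies in this set: the two vertical tails sit on $\Real \zeta = -h < 0$ with $|\Imaginary \zeta| \geq r' > r$, while the bounded middle part lies on $\partial R'$, where either $\Real \zeta = \alpha' < \alpha$ or $|\Imaginary \zeta| = r' > r$. Combined with the analyticity of $\L\{\tau_h;\zeta\}$ on a neighborhood of $\Gamma_h$, this makes the integrand norm-continuous and hence strongly measurable.

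Next I would pin down the norm bound on $e_\zeta$. Since $\Real \zeta \leq -h < 0$ everywhere on $\Gamma_h$, one has $\|e_\zeta'\|_{L^\infty} = \sup_{x \geq 0}|\zeta|\, e^{x \Real \zeta} = |\zeta|$, attained at $x=0$, and $\sup_{s \in R}|\L\{e_\zeta;s\}| = \sup_{s \in R}|s-\zeta|^{-1}$. The disjointness observed above, together with the fact that $|s-\zeta| \to \infty$ along the tails, guarantees $d := d(R,\Gamma_h) > 0$, so $\|e_\zeta\|_{\alpha,r} \leq |\zeta| + d^{-1}$. On the vertical tails, parametrized as $\zeta = -h + i\xi$ with $|\xi| \geq r'$, I would exploit the identity $\L\{\tau_h; -h + i\xi\} = \L\{\tau; i\xi\} = \widehat{\tau}(\xi)$; since $\tau \in Y \subseteq \S(\R_+)$, $\widehat{\tau}$ is a Schwartz function, hence $|\L\{\tau_h;\zeta\}| \leq C_N(1+|\xi|)^{-N}$ for every $N$. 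Therefore the integrand norm is $O((1+|\xi|)^{1-N})$ on the tails, integrable for $N$ large enough; together with the trivially finite contribution from the compact middle piece, absolute convergence in $V_{\alpha,r}$ follows.

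The step requiring the most care is the decay estimate on the vertical tails, which succeeds precisely because $\tau$ was chosen in $Y$: the Schwartz regularity of $\tau$ translates into Schwartz decay of $\widehat{\tau}$ on the imaginary axis, supplying arbitrarily many factors of $|\xi|^{-1}$ to absorb the linear growth of $\|e_\zeta\|_{\alpha,r}$ and the measure $|d\zeta|$. All remaining bounds are routine compactness and continuity arguments.
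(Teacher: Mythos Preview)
Your argument is correct and matches the paper's proof essentially line for line: continuity of $\zeta\mapsto e_\zeta$ via Lemma~\ref{LemmaAnalytic}, a polynomial bound $\|e_\zeta\|_{\alpha,r}\le |\zeta|+d^{-1}$ along $\Gamma_h$, and Schwartz decay of $\L\{\tau_h;-h+i\xi\}=\widehat{\tau}(\xi)$ on the vertical tails to absorb that growth. The only imprecision is calling $\L\{\tau_h;\zeta\}$ analytic in a \emph{neighborhood} of $\Gamma_h$; on the tails $\Real\zeta=-h$ one only has continuity (from the Fourier transform of $\tau\in\S(\R_+)$), but that is all you need and use.
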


\begin{proof}
Lemma \ref{LemmaAnalytic} implies that ${\Gamma_h} \ni \zeta \to \L\{\tau_h; \zeta\}e_{\zeta} \in V_{\alpha, r}$ is continuous. 
Therefore, it suffices to show that
	\[ \int_{\Gamma_h} |\L\{\tau_h; \zeta\}| \| e_{\zeta} \|_{\alpha, r} |d\zeta| < \infty . \]
In fact, we only need to verify that
	\[ \left( \int_{-\infty}^{-r'} + \int_{r'}^{\infty} \right) |\widehat{\tau}(\xi)| \| e_{-h + i\xi} \|_{\alpha, r} d\xi < \infty . \]
For $\zeta \in -h+i[ (-\infty,r')\cup (r', \infty)]$, we have
	\[ \| e_\zeta \|_{\alpha, r} \leq \frac{1}{r' - r} + h + |\Imaginary \zeta|, \]
whence the claim directly follows because $\tau \in \S(\R)$.
\end{proof}

We then find the next approximation result for $\tau_{h}$.

\begin{lemma}\label{LemmaApproximationbyX}
    Let $D \subseteq \{\zeta \in \C \mid \Real{\zeta} < \beta \}$ be such that it has at least one accumulation point on this half-plane. Then given any $h, \varepsilon > 0$, there are $\zeta_1, \ldots, \zeta_n \in D$ and $c_1,\ldots c_n \in \C$ such that 
    \[ \|\tau_h -\sum_{j=1}^n c_j e_{\zeta_j} \|_{\alpha,r} < \varepsilon.\]
\end{lemma}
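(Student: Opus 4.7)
The plan is to invoke the Hahn-Banach theorem: letting $X$ denote the closed linear span in $V_{\alpha, r}$ of $\{ e_{\zeta} \mid \zeta \in D \}$, to show that $\tau_{h} \in X$ (whence the desired approximation follows by definition of the closure) it suffices to prove that every continuous linear functional $\Lambda$ on $V_{\alpha, r}$ vanishing on all $e_{\zeta}$, $\zeta \in D$, also vanishes at $\tau_{h}$.

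Fix such a $\Lambda$ and consider the scalar function
\[ F(\zeta) := \Lambda(e_{\zeta}), \qquad \zeta \in \C_{-} \setminus R. \]
By Lemma \ref{LemmaAnalytic}, $\zeta \mapsto e_{\zeta}$ is a holomorphic map into $V_{\alpha, r}$ on $\C_{-} \setminus R$, so $F$ is a holomorphic scalar function on this open set. The first key check is that $\C_{-} \setminus R$ is \emph{connected}: since $R = [\alpha, 1] \times [-r, r]$ is closed and extends out of $\C_{-}$ to the right, $\C_{-} \setminus R$ is the union of the half-plane $\{\Real{\zeta} < \alpha\}$ with the two open strips $\{\alpha \leq \Real{\zeta} < 0,\, \pm\Imaginary{\zeta} > r\}$, and each strip is joined to the half-plane through the vertical ray $\{\Real{\zeta} = \alpha,\, \pm \Imaginary{\zeta} > r\}$, which lies in $\C_{-} \setminus R$. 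Since $D \subseteq \{\Real{\zeta} < \beta\} \subset \C_{-} \setminus R$ is assumed to have an accumulation point in this half-plane, the identity theorem forces $F \equiv 0$ on $\C_{-} \setminus R$. In particular, $F$ vanishes identically along the contour $\Gamma_{h}$: indeed, the vertical portions $-h + i(\pm[r', \infty))$ have $\Real{\zeta} = -h \in (\alpha, 0)$ with $|\Imaginary{\zeta}| > r$, and $\partial R' \cap \{\Real{\zeta} \leq -h\}$ either has $\Real{\zeta} = \alpha' < \alpha$ or $|\Imaginary{\zeta}| = r' > r$, so none of these points lies in $R$.

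With this in hand, apply $\Lambda$ to the integral representation supplied by Lemma \ref{LemmaAbsolutelyConvergent}. Because that vector-valued integral converges absolutely in the Banach space $V_{\alpha, r}$ and $\Lambda$ is continuous, $\Lambda$ commutes with the integral, yielding
\[ \Lambda(\tau_{h}) \;=\; \frac{1}{2 \pi i} \int_{\Gamma_{h}} \L\{\tau_{h}; \zeta\}\, \Lambda(e_{\zeta})\, d\zeta \;=\; \frac{1}{2 \pi i} \int_{\Gamma_{h}} \L\{\tau_{h}; \zeta\}\, F(\zeta)\, d\zeta \;=\; 0. \]
This gives $\tau_{h} \in X$, which is precisely the conclusion of the lemma. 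The main conceptual obstacle is the analytic continuation step: one must guarantee that the domain $\C_{-} \setminus R$ of holomorphy of $F$ is connected and contains both the accumulation point of $D$ and the whole contour $\Gamma_{h}$, so that the vanishing on $D$ propagates to $\Gamma_{h}$. Everything else is standard duality and a Fubini-type interchange justified by absolute convergence.
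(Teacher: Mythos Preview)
Your proof is correct and follows essentially the same route as the paper: apply the Hahn--Banach theorem, test against an arbitrary functional, use Lemma~\ref{LemmaAnalytic} to get a holomorphic scalar function $F$ on $\C_{-}\setminus R$, invoke the identity theorem to force $F\equiv 0$, and then pass the functional inside the absolutely convergent integral from Lemma~\ref{LemmaAbsolutelyConvergent}. You even supply two details the paper leaves implicit, namely the connectedness of $\C_{-}\setminus R$ and the verification that $\Gamma_{h}\subset \C_{-}\setminus R$. One tiny imprecision: you assert $-h\in(\alpha,0)$, but only $-h\in(\alpha',0)$ is guaranteed; this is harmless since in either case the vertical rays satisfy $|\Imaginary\zeta|\geq r'>r$ and hence avoid $R$.
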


\begin{proof}
Let $Z = \spn \{ e_{\zeta} \mid \zeta \in D \} \subseteq V_{\beta}$. We will show that $\tau_{h} \in \overline{Z}^{V_{\alpha, r}}$.
Let $\mu \in V_{\alpha, r}^{\prime}$ be a continuous linear functional and set $F(\zeta) = \langle \mu, e_{\zeta} \rangle $ for $\zeta \in \C_{-} \setminus R$. Then $F$ is holomorphic on this domain by Lemma \ref{LemmaAnalytic}. 
If $F(\zeta) = 0$ for every $\zeta \in D$, then it would be $0$ on its entire domain, and hence also on $\Gamma_h$. 
By Lemma \ref{LemmaAbsolutelyConvergent}, we thus conclude that 
	\[ \langle \mu, \tau_h \rangle = \langle \mu, \frac{1}{2 \pi i} \int_{\Gamma_{h}} \L\{\tau_h; \zeta\} e_{\zeta} \:d\zeta \rangle = \frac{1}{2 \pi i} \int_{\Gamma_{h}} \L\{\tau_h; \zeta\} F(\zeta) d\zeta = 0 . \]
As $\mu \in V_{\alpha, r}^{\prime}$ was chosen arbitrarily, it follows from the Hahn-Banach theorem that $\tau_{h} \in \overline{Z}^{V_{\alpha, r}}$.
\end{proof}

\begin{proof}[Proof of Proposition \ref{p:DensitySteps}]
	By Lemmas \ref{LemmaMisDense} and \ref{LemmaApproximationbyX}, we obtain in particular that  $V^{0}_{\alpha, r}$ is contained in the closure of $V_{\beta}$ in $V_{\alpha, r}$, which suffices to show the result.
\end{proof}

\begin{remark}
In fact, we have shown the stronger statement: $V^{0}_{\alpha, r}$ is contained in the closure of $\spn\{ e_{\zeta} \mid \zeta \in D \}$ in $V_{\alpha, r}$, for any $D$ as in Lemma \ref{LemmaApproximationbyX}.
\end{remark}

\section{The absence of remainders in the Ingham-Karamata theorem}
\label{sect absence of remainders}
We now switch our attention to the proof of Theorem \ref{th3 abserrorW-I}. The treatment here basically reproduces that from \cite[Section 3]{D-V2019} up to minor modifications\footnote{The proof in \cite{D-V2019} explicitly treats the optimality of the Wiener-Ikehara theorem (cf. Corollary \ref{OW-I}), which is basically equivalent to the optimality problem for the Ingham-Karamata theorem.}, but we give details for the sake of completeness and in order to emphasize the key role that Theorem \ref{MainResult} plays in the method.

We will make use of the following majorization lemma. We omit its proof and refer to \cite{D-V-NoteAbsenceRemWienerIkehara} for it.

\begin{lemma}[{\cite[Lemmas 1 and 2]{D-V-NoteAbsenceRemWienerIkehara}}]
\label{l:l1aofr} Let $\rho$ be a positive function such that $\rho(x)= o(1)$. Then, there exists $L \in C^1[0, \infty)$ such that 
\[
\lim_{x\to\infty} L(x)=\lim_{x\to\infty} L'(x)=0,
\]
\begin{equation}
\label{eq:1aofr}
\rho(x)=o(L(x)),
\end{equation}
and $\L\{L; s\}$ has analytic continuation to a sector $-\pi + \vartheta < \arg{s} < \pi - \vartheta$ for some angle $\vartheta \in (0, \pi / 2)$.
\end{lemma}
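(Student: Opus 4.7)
The natural approach is to realise $L$ in Laplace--Stieltjes form with density supported on $[0,1]$,
\[
L(x) = \int_{0}^{1} e^{-xt}\, h(t)\, dt,
\]
for a nonnegative $h \in L^{1}(0,1)$ to be chosen. Such $L$ is automatically entire in $x$, and a Fubini computation gives
\[
\L\{L; s\} = \int_{0}^{1} \frac{h(t)}{s+t}\, dt,
\]
which is holomorphic on $\C \setminus [-1, 0]$. Since the segment $[-1, 0]$ lies on the negative real axis, this domain contains every sector $-\pi + \vartheta < \arg s < \pi - \vartheta$ with $\vartheta \in (0, \pi/2)$, so the sectorial extension comes for free. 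Integrability of $h$, together with $|t\, h(t)| \leq h(t)$ on $[0,1]$ and dominated convergence, yields $L \in C^{1}[0,\infty)$ with $L(x) \to 0$ and $L'(x) = -\int_{0}^{1} t\, e^{-xt}\, h(t)\, dt \to 0$ as $x \to \infty$.

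The substantive step is to choose $h$ so that $\rho(x) = o(L(x))$. First I would majorise $\rho$ by a strictly positive, smooth, non-increasing $\tilde\rho \to 0$ with $\rho \leq \tilde\rho$ and slow enough decay that $\tilde\rho$ may be taken slowly varying at infinity (this can always be arranged from $\rho = o(1)$ by standard smoothing-and-majorisation). Passing to $\bar\rho := \sqrt{\tilde\rho}$ reduces the task to merely producing $L(x) \geq c\, \bar\rho(x)$ for large $x$, since then $\rho/L \leq \tilde\rho/(c\bar\rho) = \bar\rho/c \to 0$. Setting $\Phi := 1/\bar\rho$, an increasing $C^{1}$ function tending to $\infty$, I would take
\[
h(t) = \frac{\Phi'(1/t)}{t^{2}\, \Phi(1/t)^{2}}, \qquad t \in (0, 1],
\]
so that the substitution $u = 1/t$ gives $\int_{0}^{1} h(t)\, dt = 1/\Phi(1) < \infty$ and $\int_{0}^{t} h(s)\, ds = 1/\Phi(1/t) = \bar\rho(1/t)$ as $t \to 0^{+}$. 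Karamata's Abelian theorem for Laplace transforms, applied to the slowly varying primitive $t \mapsto \bar\rho(1/t)$, then yields $L(x) \sim \bar\rho(x)$ at infinity, which is more than enough.

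The main obstacle is precisely this asymptotic matching: the $C^{1}$ regularity, the decay of $L$ and $L'$ to $0$, and the sectorial analyticity of $\L\{L;s\}$ all follow with little effort from the Stieltjes-type representation. The genuine work lies in constructing an integrable density $h$ whose Laplace transform decays at an arbitrarily slow prescribed rate; this hinges on Karamata-type Abelian/Tauberian theorems translating the prescribed behavior of $\int_{0}^{t} h(s)\, ds$ near $t = 0^{+}$ into asymptotics of $L$ at infinity. A careful initial replacement of $\rho$ by a slowly varying majorant is what makes this translation applicable.
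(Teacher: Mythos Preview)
The paper itself omits this proof entirely, referring instead to \cite{D-V-NoteAbsenceRemWienerIkehara}; there is no in-paper argument to compare against. Evaluated on its own, your Stieltjes-integral construction is correct: the representation $L(x)=\int_0^1 e^{-xt}h(t)\,dt$ with $0\leq h\in L^1(0,1)$ immediately gives $L\in C^1[0,\infty)$, $L,L'\to 0$, and $\L\{L;s\}=\int_0^1 h(t)/(s+t)\,dt$ holomorphic on $\C\setminus[-1,0]$, hence on every sector $|\arg s|<\pi-\vartheta$.

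One simplification is worth pointing out. You flag the Karamata step as the ``main obstacle'', but since you only need the \emph{lower} bound $L(x)\geq c\,\bar\rho(x)$ (not the full asymptotic $L\sim\bar\rho$), slow variation is unnecessary. With $H(t)=\int_0^t h=\bar\rho(1/t)$ and $\bar\rho$ merely non-increasing, integration by parts and the substitution $u=xt$ give, for $x\geq 2$,
\[
L(x)\;\geq\; x\int_0^1 e^{-xt}H(t)\,dt \;=\;\int_0^x e^{-u}\,\bar\rho(x/u)\,du\;\geq\;\bar\rho(x)\int_1^x e^{-u}\,du\;\geq\;(e^{-1}-e^{-2})\,\bar\rho(x),
\]
using $\bar\rho(x/u)\geq\bar\rho(x)$ for $1\leq u\leq x$. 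This bypasses both the Abelian theorem and the slowly-varying majorisation step (which you left as ``standard'' but did not justify); it suffices to take $\tilde\rho$ to be any strictly positive, $C^1$, strictly decreasing majorant of $\rho$ tending to $0$, which is elementary to produce.
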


\begin{proof}[Proof of Theorem \ref{th3 abserrorW-I}] We reason by contradiction. Hence, as opposed to \eqref{eq:2I-K}, assume that there is some positive function $\rho$ such that $\rho(x)=o(1)$ and
\begin{equation}
\label{eq:2aofr}
\tau(x)=O(\rho(x)), \qquad  \mbox{for all }\tau\in V_{-\infty}.
\end{equation}
Majorizing $\rho$ if necessary, we might additionally assume that $\rho$ is continuous. We shall compare two locally convex topologies on $V_{-\infty}$. The first one is its canonical Fr\'{e}chet topology, already introduced in Section \ref{MainResults} by means of the family of norms \eqref{eq:ineq2}. By the assumption \eqref{eq:2aofr}, the family of norms 
\[ \| \tau \|'_{\alpha, r} = \| \tau \|_{\alpha, r} + \sup_{x\geq 0} \left| \frac{\tau(x)}{\rho(x)}\right| , \qquad \alpha<0 \mbox{ and } r > 0 , \]
is well-defined and clearly induces a second Fr\'{e}chet space topology on $V_{-\infty}$. The second of these topologies is obviously finer than the first one, so that the open mapping theorem for Fr\'{e}chet spaces allows us to conclude that they must actually coincide. Consequently, there are $\alpha<0$, $r > 0$, and a constant $C > 0$ such that
\begin{equation}
\label{eq:3aofr}
 \sup_{x \geq 0} \left|\frac{\tau(x)}{ \rho(x)}\right| \leq C \| \tau \|_{\alpha, r} \:,
\end{equation}
for all $\tau\in V_{-\infty} .$ Armed with Theorem \ref{MainResult}, we conclude that this inequality extends to the whole of $V_{\alpha, r}$.

We now fix $L$ having the properties stated in Lemma \ref{l:l1aofr}. The next step is using \eqref{eq:3aofr} to produce a conflict with \eqref{eq:1aofr}. For it, we consider the function
	\[ L_{b}(x) = L(x) \sin(bx), \qquad b>0, \]
whose Laplace transform is given by
	\[ \mathcal{L}\{L_b; s\} = \frac{1}{2i} \left( \L\{L; s - ib \} - \L\{L; s + ib\} \right) . \]
From this expression one sees that there exists $M > 0$ such that $L_{b} \in V_{\alpha, r}$ for $b \geq M$ and furthermore $\{ \|L_{b}\|_{\alpha, r} \mid b \in [M, M + 1] \}$ is a bounded subset of the Banach space $V_{\alpha, r}.$
For $x$ large enough, $\sin(b x) = 1$ for some $b \in [M, M + 1]$. Hence,
	\[ \frac{L(x)} {\rho(x)} \leq \sup_{b \in [M, M + 1]} \left| \frac{ L(x) \sin(bx)}{\rho(x)} \right| \leq \sup_{b \in [M, M + 1]} \sup_{y \geq 0} \left| \frac{ L_{b}(y)}{\rho(y)}\right| \leq C \sup_{b \in [M, M + 1]} \|L_{b}\|_{\alpha, r} < \infty , \]
for all sufficiently large $x$. Thus, $L(x) = O(\rho(x))$, in contradiction with \eqref{eq:1aofr}.
\end{proof}

\end{document}